\theoremstyle{plain}
\newtheorem{Theorem}{Theorem}[section]
\newtheorem{Corollary}[Theorem]{Corollary}
\newtheorem{Lemma}[Theorem]{Lemma}
\newtheorem{Remark}[Theorem]{Remark}
\theoremstyle{definition}
\newtheorem{Definition}[Theorem]{Definition}
\newtheorem{example}[Theorem]{Example}
\DeclareMathOperator{\N}{\mathbb{N}}
\DeclareMathOperator{\C}{\mathbb{C}}
\DeclareMathOperator{\F}{\mathbb{F}}
\DeclareMathOperator{\PG}{\mathrm{PG}}
\newcommand{\youngdiagram}[1]{
  \begin{tikzpicture}[scale=0.2,baseline=0pt]
    \foreach \row [count=\i] in {#1} {
      \foreach \x in {1,...,\row} {
        \draw (\x,-\i) rectangle +(1,1);
      }
    }
  \end{tikzpicture}
}
\newcommand\ltriangle{\trianglelefteqslant}
\newcommand\rtriangle{\trianglerighteqslant}
\renewcommand{\le}{\leqslant}
\renewcommand{\ge}{\geqslant}
\definecolor{dkgreen}{rgb}{0,0.6,0}
\definecolor{gray}{rgb}{0.5,0.5,0.5}
\definecolor{mauve}{rgb}{0.58,0,0.82}
\tiny\color{gray},
\title[On the association scheme of perfect matchings and their designs]{On the association scheme of perfect matchings\\ and their designs}
\author{John Bamberg}
\address[Bamberg]{Department of Mathematics and Statistics, 
The University of Western Australia, 35 Stirling Highway, Perth, W.A. 6009, Australia.}
\email{john.bamberg@uwa.edu.au}
\author{Lukas Klawuhn}
\address[Klawuhn]{Department of Mathematics, Paderborn University, Warburger Str.\ 100, 33098 Paderborn, Germany.}
\email{klawuhn@math.upb.de}
\date{18 March 2026}
\dedicatory{Dedicated to the memory of Kai-Uwe Schmidt, who started us on the journey of
this research.}
\begin{document}

\begin{abstract}
We investigate generalisations of 1-factorisations and hyperfactorisations of the complete graph $K_{2n}$. We show that they are special subsets of the association scheme obtained from the Gelfand pair $(S_{2n},S_2 \wr S_n)$. This unifies and extends results by Cameron (1976) and gives rise to new existence and non-existence results. Our methods involve working in the group algebra $\mathbb{C}[S_{2n}]$ and using the representation theory of $S_{2n}$.
\end{abstract}

\maketitle

\section{Introduction}

It is well-known that the complete graph $K_{2n}$ on an even number of vertices can always be decomposed into perfect matchings, called a \emph{1-factorisation}. Equivalently, we can view a 1-factorisation as a set of perfect matchings of $K_{2n}$ such that every edge of $K_{2n}$ is contained in exactly one perfect matching. From this point of view, one can generalise 1-factorisations to \emph{hyperfactorisations}, studied by Jungnickel and Vanstone \cite{JunVan1987}. A \emph{hyperfactorisation} is a set of perfect matchings of $K_{2n}$ such that every pair of disjoint edges is contained in exactly $c$ perfect matchings, where $c$ is a constant called the \emph{index} of the hyperfactorisation. Boros, Jungnickel, and Vanstone \cite{BorJunVan1991} showed that $K_{2n}$ has a non-trivial hyperfactorisation for every $n \ge 5$.

Cameron \cite{Cam1976} generalised hyperfactorisations (or `parallelisms of $\binom{X}{t}$') to partition systems. He defines an \emph{$s$-$(t,n)$ partition system} to be a collection $S$ of partitions of an $n$-set $X$ into $t$-subsets having the property that given any $s$ disjoint $t$-subsets of $X$, there is a unique partition in $S$ which has all of the given $t$-subsets as parts. This is a natural way to generalise 1-factorisations and hyperfactorisations of complete graphs. Indeed, a 1-factorisation of $K_n$ is simply a $1$-$(2,n)$ partition system. Independently, Stinson \cite{Stinson} defined \emph{hyperresolutions} of a resolvable block design, which turn out to be $2$-$(t,n)$ partition systems. Cameron constructed infinitely many examples of $2$-$(2,n)$ partition systems from hyperovals of projective planes of even order.

We generalise the notion of $s$-$(2,n)$ partition systems to \emph{$\lambda$-factorisations} by replacing the parameter $s$ with a partition $\lambda$ and use the theory of association schemes to investigate them. In an $s$-$(2,n)$ partition system, any set of $s-1$ subsets of size $2$ appears a constant number of times. The counting involved does not give rise to restrictions on the parameters $s$ and $n$ (see \cite[Theorem 7.2]{Cam1976}). We show that similar counting arguments work for $\lambda$-factorisations and give rise to non-trivial divisibility conditions by comparing different partitions $\lambda$.

This work can be considered a follow-up to the work of Rands \cite{Rands} on the association
scheme of perfect matchings of the complete graph $K_{2n}$. Rands calculated the eigenvalues
of the association scheme for $n\in\{4,5,6\}$, and he also observed that we can view
abstract hyperovals and 1-factorisations as cliques for this association scheme. We take
this viewpoint further and consider $\lambda$-factorisations as designs in the association scheme,
and use Delsarte theory to produce results on the existence of such designs. 
Our main result is a characterisation of $\lambda$-factorisations as designs:

\begin{Theorem}[Paraphrase of Theorem \ref{theorem:main1}]
    A non-empty set of perfect matchings is a $\lambda$-factorisation if and only if its dual degree set contains no partition $\mu\vdash n$ with $\lambda \ltriangle \mu \neq (n)$.
\end{Theorem}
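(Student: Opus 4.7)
My plan is to recast the combinatorial definition of a $\lambda$-factorisation as an orthogonality condition inside $\mathbb{C}[\mathcal{M}]$, identify the relevant subspace representation-theoretically as an isotypic sum, and then read the theorem off from the standard Delsarte equivalence between the design property and vanishing in the dual basis. Writing $H=S_2\wr S_n$ and identifying the set $\mathcal M$ of perfect matchings of $K_{2n}$ with the coset space $S_{2n}/H$, the zonal spherical function theory of the Gelfand pair $(S_{2n},H)$ yields a multiplicity-free decomposition
\[
\mathbb{C}[\mathcal{M}] \;=\; \bigoplus_{\mu\vdash n} V_\mu,
\]
where $V_\mu$ is the isotypic component corresponding to the Specht module $S^{2\mu}$ of $S_{2n}$ indexed by the partition $2\mu=(2\mu_1,2\mu_2,\ldots)$ of $2n$. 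If $E_\mu$ denotes the primitive idempotent of the scheme projecting onto $V_\mu$, then by definition the dual degree set of $Y\subseteq\mathcal M$ is $\{\mu\neq(n):E_\mu\chi_Y\neq 0\}$.

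The first step is to translate ``$Y$ is a $\lambda$-factorisation'' into an orthogonality statement. Let $U_\lambda\subseteq\mathbb C[\mathcal M]$ be the span of the characteristic vectors of the sets of perfect matchings extending a fixed partial structure of shape $\lambda$, and let $\mathbf{1}$ denote the all-ones vector. Unwinding the definition (uniform coverage of every shape-$\lambda$ substructure by elements of $Y$) shows that $Y$ is a $\lambda$-factorisation if and only if $\chi_Y-(|Y|/|\mathcal M|)\,\mathbf{1}$ is orthogonal to $U_\lambda\ominus\mathbb C\mathbf{1}$.

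The main technical step, where I expect the principal obstacle to lie, is the representation-theoretic identification
\[
U_\lambda \;=\; \bigoplus_{\mu\,:\,\lambda\,\ltriangle\,\mu} V_\mu.
\]
Since $U_\lambda$ is $S_{2n}$-invariant, it is an isotypic sum of the $V_\mu$'s, so the question is which components appear. The inclusion $\subseteq$ comes from Young's rule: $U_\lambda$ is a homomorphic image of the Young permutation module $\mathrm{Ind}_{S_{2\lambda_1}\times\cdots\times S_{2\lambda_\ell}}^{S_{2n}}(\mathbf{1})$, whose Specht support is $\{\nu\vdash 2n:2\lambda\,\ltriangle\,\nu\}$; intersecting with the support $\{2\mu:\mu\vdash n\}$ of $\mathbb C[\mathcal M]$ and using that dominance respects doubling gives $U_\lambda\subseteq\bigoplus_{\lambda\,\ltriangle\,\mu}V_\mu$. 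For the reverse inclusion one must exhibit, for each $\mu$ with $\lambda\ltriangle\mu$, an $H$-fixed vector of $V_\mu$ lying in $U_\lambda$; this can be done by a zonal spherical function computation, by a triangularity-plus-dimension count in the basis of spherical functions, or by adapting James's submodule theorem to the spherical setting.

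Taking orthogonal complements inside $\mathbb C[\mathcal M]$, the space $(U_\lambda\ominus\mathbb C\mathbf{1})^\perp$ is $\mathbb C\mathbf{1}$ plus the sum of those $V_\mu$ with $\mu\neq(n)$ for which $\lambda\ltriangle\mu$ fails. Combined with the orthogonality criterion of the first step, this gives: $Y$ is a $\lambda$-factorisation if and only if $E_\mu\chi_Y=0$ for every $\mu\vdash n$ satisfying $\lambda\ltriangle\mu$ and $\mu\neq(n)$, which is precisely the stated condition that the dual degree set of $Y$ contains no such $\mu$.
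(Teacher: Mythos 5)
Your overall architecture is sound and in fact mirrors the paper's argument in module-theoretic language: your step (i) (constant intersection with every set $A_P$ of matchings refining a shape-$2\lambda$ set partition, rephrased as orthogonality of $\mathbbm{1}_Y$ to $U_\lambda\ominus\C\mathbf{1}$) is the design-orthogonality/antidesign mechanism the paper uses, and your Young's-rule argument correctly gives $U_\lambda\subseteq\bigoplus_{\lambda\ltriangle\mu}V_\mu$, which yields the implication ``$a'_\mu(Y)=0$ for all $\lambda\ltriangle\mu\neq(n)$ $\Rightarrow$ $Y$ is a $\lambda$-factorisation''. The gap is the reverse inclusion $\bigoplus_{\lambda\ltriangle\mu}V_\mu\subseteq U_\lambda$, equivalently $E_\mu\mathbbm{1}_{A_P}\neq 0$ for \emph{every} $\mu$ with $\lambda\ltriangle\mu$; without it you cannot conclude the forward direction (factorisation $\Rightarrow$ design), since a priori $U_\lambda\ominus\C\mathbf{1}$ might miss some $V_\mu$ with $\lambda\ltriangle\mu\neq(n)$. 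You flag this as ``the main technical step'' and offer three candidate strategies, but none is carried out, and two of them do not obviously work: triangularity of zonal spherical functions (or of zonal polynomials against monomial symmetric functions) only reproduces the support statement you already have from Young's rule together with the nonvanishing of the \emph{leading} coefficient, not the nonvanishing of every dominated coefficient; and a dimension count for $U_\lambda$ (the rank of the inclusion matrix between shape-$2\lambda$ set partitions and perfect matchings) is essentially equivalent to the statement being proved, so it cannot be invoked as known.

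This missing nonvanishing is precisely where the paper does real work: it forms $\theta^\lambda=e_{B_n}\ast 1_{2\lambda}\ast e_{B_n}$, computes that $\langle\omega^\mu,\theta^\lambda\rangle$ equals the dual distribution $a'_\mu(A)$ of the antidesign $A=A_P$ up to a positive constant, identifies $\langle\omega^\mu,\theta^\lambda\rangle$ with a positive multiple of the coefficient $u_{\mu\lambda}$ of the monomial symmetric function $m_\lambda$ in the zonal polynomial $Z_\mu$ (Macdonald, Ch.~VII (2.22)), and then invokes the fact that zonal polynomials are Jack symmetric functions at $\alpha=2$, for which all dominated monomial coefficients are strictly positive (Ch.~VI (10.13), (10.15)). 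That positivity theorem is the essential input; your proof becomes complete once you either cite it (as the paper does) or supply an independent proof that $E_\mu\mathbbm{1}_{A_P}\neq0$ whenever $\lambda\ltriangle\mu$. As written, the proposal reduces the theorem to this claim but does not establish it.
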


In Section \ref{section:designs}, we give several interesting corollaries of this result.
One of which (Corollary \ref{cor:comparison_t_factorisations}),
is that for every $n\ge 4$, an $(n-2,2)$-factorisation\footnote{This is a set $D$ of perfect matchings of $K_{2n}$ such that for every 4-subset $F$ of the vertices, there are precisely $c$ elements of $D$ which have two edges contained in $F$ (and $c$ is the \emph{index}).} of index coprime to 3 (in particular index 1) can only exist 
if $n \equiv 0 \pmod 3$. Also, in Corollary \ref{corollary:2111} we show that 
there is no $(2,1,\ldots,1)$-factorisation for all $n\ge 4$,
generalising a result of Cameron \cite[Theorem 7.6]{Cam1976} on partition systems.

\section{Preliminaries}

In this section, we fix some notation and introduce the basic definitions of partitions and irreducible characters.

\subsection{Partitions}

Many combinatorial objects can be indexed by partitions. Here, we give a brief overview of the main definitions. We mostly follow \cite[Ch. I]{Mac1995}.

An (integer) \emph{partition} is a sequence $\lambda = (\lambda_1,\lambda_2,\ldots)$ of non-negative integers that sum to a finite number and satisfy $\lambda_i \ge \lambda_{i+1}$ for every $i \in \N$. The \emph{size} of $\lambda$ is the number $|\lambda| = \sum_{i\ge 1} \lambda_i$ and if $|\lambda| = n$, we say that $\lambda$ is a partition of $n$, denoted by $\lambda\vdash n$. We often write partitions as finite sequences $(\lambda_1,\ldots,\lambda_k)$ only writing down the non-zero parts. The number of non-zero parts of $\lambda$ is the \emph{length} of $\lambda$, denoted by $l(\lambda)$. We use the notation $\lambda!$ for the product of the factorials $\lambda_i!$.

For a partition $\lambda \vdash n$, we denote by $2\lambda$ the partition of $2n$ with parts $(2\lambda_1,2\lambda_2,\ldots)$. If $\lambda \vdash n$, then the \emph{Young diagram} of $\lambda$ is an array of $n$ boxes with left-justified rows and top-justified columns such that row $i$ contains exactly $\lambda_i$ boxes. 
We use the dominance order $\ltriangle$ to compare partitions. Let $\lambda,\mu \vdash n$ be two partitions. We write $\lambda \ltriangle \mu$ and say that $\lambda$ is \emph{dominated} by $\mu$ or $\mu$ \emph{dominates} $\lambda$ if
\[
	\sum_{i=1}^k \lambda_i \le \sum_{i=1}^k \mu_i \quad \text{ for all } k \ge 1.
\]
A helpful fact is that we have $\lambda \ltriangle \mu$ if and only if the Young diagram of $\lambda$ can be transformed into the Young diagram of $\mu$ by moving boxes from the end of a row of $\lambda$ to a higher row one at a time. When interpreting a partition as a sequence, this corresponds to adding vectors of the form $(0,\ldots,0,1,0,\ldots,0,-1,0,\ldots)$ to the partition.

We will also make heavy use of \emph{set partitions}.

\begin{Definition}
Let $n \in \mathbb{N}$. A \emph{set partition} is a set $\{P_1,P_2,\ldots\}$ of non-empty disjoint subsets $P_i \subseteq \{1,\ldots,n\}$ such that $P_1 \cup P_2 \cup \ldots = \{1,\ldots,n\}$. The partition $\lambda$ whose parts are the sizes $|P_i|$ (ordered non-increasingly) is called the \emph{shape} of the set partition.
An \emph{ordered set partition} is a tuple $(P_1,P_2,\ldots)$ of non-empty disjoint subsets $P_i \subseteq \{1,\ldots,n\}$ such that $\{P_1,P_2,\ldots\}$ is a set partition. The shape of an ordered set partition is the tuple $(|P_1|,|P_2|,\ldots)$ (which is not necessarily ordered non-increasingly).
\end{Definition}

A \emph{perfect matching} of the complete graph $K_{2n}$ is simply a set partition of the set $\{1,\ldots,2n\}$ into $n$ subsets of size $2$. Its shape is $(2,\ldots,2) \vdash 2n$. It is well-known that $K_{2n}$ has exactly
\[
    (2n-1)!! = (2n-1)\cdot (2n-3) \cdot \cdots  \cdot 5 \cdot 3 \cdot 1
\]
perfect matchings. 

\subsection{Characters}

Since many formulas in association schemes defined on groups involve irreducible characters, we now turn to representation and character theory. We refer the reader to \cite{Ser1977} for a more extensive introduction to the representation theory of finite groups. For a finite group $G$,
we will refer to ordinary representations and characters, and the \emph{degree} of a character will
be the dimension of the vector space for the associated representation.

We remind the reader that the class functions of $G$ form an inner product space under
the inner product given by
\[
	\langle \chi, \psi \rangle = \frac{1}{|G|}\sum_{g \in G} \chi(g)\overline{\psi(g)}
\]
and the irreducible characters form an orthonormal basis of the space of class functions with respect to $\langle \cdot,\cdot \rangle$. 

Furthermore, if $\varphi$ is an arbitrary character and $\chi$ is an irreducible character, then $\langle \varphi,\chi \rangle$ is the \emph{multiplicity} of $\chi$ in the decomposition of $\varphi$ into irreducible characters. If $\langle \varphi,\chi \rangle \neq 0$, then $\chi$ is called an \emph{irreducible constituent} of $\varphi$. If the multiplicity of every irreducible constituent of $\varphi$ is equal to 1, then $\varphi$ is called \emph{multiplicity-free}.

Let $H \le G$. If $\chi$ is a character of $G$, then restricting $\chi$ to $H$ gives a character of $H$. This character is called the \emph{restriction} of $\chi$ to $H$ and denoted by $\chi \downarrow^G_H$. There is a way to extend a character of a subgroup $H$ to a character of $G$ called \emph{induction}. If $\chi$ is a character of $H$, then the induced character is denoted by $\chi \uparrow^G_H$.
The function $1_G$ with $1_G(g) = 1$ for all $g \in G$ is a character of $G$, called the \emph{trivial character} of $G$.

\subsection{Character Theory of the Symmetric Group}

It is well-known that the irreducible characters of the symmetric group $S_n$ are indexed by the partitions of $n$. If $\lambda \vdash n$, then $\chi^\lambda$ denotes the irreducible character of the symmetric group $S_n$ indexed by the partition $\lambda$.
Notice that the symmetric group $S_n$ naturally acts on ordered set partitions. The stabiliser of an ordered set partition is of the form
\[
    S_\lambda = S_{\lambda_1} \times S_{\lambda_2} \times \cdots \times S_{\lambda_{l(\lambda)}}.
\]
The subgroup $S_\lambda$ is called a \emph{Young subgroup} of shape $\lambda$. All Young subgroups of the same shape are conjugate to each other. Hence, all characters obtained by inducing a character from a Young subgroup of a given shape to the whole group are equal.
For two partitions $\lambda,\mu$ of $n$, we have ``Young's rule" (cf. \cite[Theorem 2.11.2]{Sagan2001}): $\langle 1\uparrow_{S_{\lambda}}^{S_{n}},\chi^\mu \rangle \neq 0$ if and only if $\lambda \ltriangle \mu$.

\section{Association Schemes and Gelfand Pairs}

\subsection{Association schemes}

Association schemes are a helpful structure to investigate interesting combinatorial structures and they provide powerful techniques to analyse them. We refer the reader to \cite{BanIto1984} for an extensive introduction to association schemes.
An \emph{association scheme} (with $d$ classes) is a pair $(X,\mathcal{R})$ of a finite set $X$ with $|X| \ge 2$ and a family of relations $\mathcal{R} = (R_i)_{i=0,\ldots,d}$ subject to the following conditions:
\begin{enumerate}
    \item[(A0)]{ $\mathcal{R}$ forms a set partition of $X\times X$; that is, 
        $R_i \cap R_j = \varnothing$ for $i \neq j$ and $\bigcup\limits_{i = 0}^{d} R_i = X \times X$.
    }
    \item[(A1)]{
    $R_0$ is the `equality' relation; that is, 
        $R_0 = \{(x,x) \in X \times X \mid x \in X\}$.
    }
    \item[(A2)]{
    $\mathcal{R}$ is closed under converses; that is,
        $R_i^\top = \{(y,x) \in X \times X \mid (x,y) \in R_i\} = R_j$ for some $j \in \{0,\ldots,d\}$.
    }
    \item[(A3)]{
       Let $i,j,k \in \{0,\ldots,d\}$. If $(x,y) \in R_k$, then the number $p_{ij}^k$ of elements $z \in X$ such that $(x,z) \in R_i$ and $(z,y) \in R_j$ only depends on $i$, $j$ and $k$ but not on the choice of $(x,y)$.
    }
    \item[(A4)]{
        For every $i,j,k \in \{0,\ldots,d\}$ we have $p_{ij}^k = p_{ji}^k$.
    }
\end{enumerate}

For finite and non-empty sets $X$ and $Y$, let $\C(X,Y)$ denote the set of all complex $|X| \times |Y|$-matrices with rows indexed by $X$ and columns indexed by $Y$. For a matrix $A \in \C(X,Y)$ and $x\in X$, $y \in Y$, the $(x,y)$-entry of $A$ is denoted by $A(x,y)$. If $|Y|=1$, then we omit $Y$, so $\C(X)$ is the set of complex column vectors indexed by $X$.

Let $(X,R)$ be an association scheme. For $i \in \{0,\ldots,d\}$, let $A_i \in \C(X,X)$ be the $i$-th
\emph{adjacency matrix} given by
\begin{equation}\label{eqn:def_incidence_matrices_general}
	A_i(x,y) = \begin{cases} 1,& \text{ if } (x,y) \in R_i\\
						       0,& \text{ otherwise}.
				\end{cases}
\end{equation}
Let $\mathbb{A} = \operatorname{sp}(A_0,\ldots,A_d)$ be the vector space generated by $A_0,\ldots,A_d$ over the complex numbers. Then $\mathbb{A}$ is a commutative matrix algebra with basis $A_0,\ldots,A_d$ that contains the identity matrix and is closed under conjugate transposition. Thus, the zero-one-matrices $A_i$ define an association scheme. The algebra $\mathbb{A}$ is called the \emph{Bose--Mesner algebra} of the association scheme.

We can rewrite the definition of an association scheme in terms of matrices. Let $X$ be a finite set with $|X| \ge 2$ and $(A_i)_{i=0,\ldots,d}$ be a family of matrices in $\C(X,X)$ with entries $0$ or $1$ subject to the following conditions:
\begin{enumerate}
    \item[(A0')]{
        $\sum_{i=0}^{d} A_i = J$ where $J$ denotes the all-ones matrix;
    }
    \item[(A1')]{
        $A_0 = I$ where $I$ denotes the identity matrix;
    }
    \item[(A2')]{
        $A_i^\top = A_j$ for some $j \in \{0,\ldots,d\}$;
    }
    \item[(A3')]{
        For every $i,j \in \{0,\ldots,d\}$ we have $A_iA_j = \sum_{k=0}^d p_{ij}^k A_k$;
    }
    \item[(A4')]{
        For every $i,j \in \{0,\ldots,d\}$ we have $A_iA_j = A_jA_i$.
    }
\end{enumerate}
The matrices $A_i$ of an association scheme fulfil these conditions and if a family of matrices fulfils these conditions, it generates the Bose--Mesner algebra of an association scheme.

Since the Bose--Mesner algebra $\mathbb{A}$ of an association scheme is commutative and closed under conjugate transposition, all of its elements are normal and can be simultaneously diagonalised via a unitary matrix. Therefore, there exists a basis $E_0,\ldots,E_d$ of $\mathbb{A}$ consisting of Hermitian matrices with the property
\begin{equation}\label{eqn:minimal_idempotent_property_general}
	E_k E_l = \delta_{kl} E_k.
\end{equation}
The matrices $E_k$ are the minimal idempotents of $\mathbb{A}$.

Let $V_k$ be the column space of $E_k$. The spaces $V_k$ are called the \emph{eigenspaces} of the association scheme because they are the common eigenspaces of the matrices $A_i$. From (\ref{eqn:minimal_idempotent_property_general}), the $V_k$ are pairwise orthogonal and $\C(X) = \bigoplus_{k = 0}^d V_k$.
\par
Let $Y \subseteq X$ be a non-empty subset. We can associate two sequences of numbers with $Y$: the \emph{inner distribution} $a$ and the \emph{dual distribution} $a'$. The inner distribution of $Y$ is the tuple $(a_0,\ldots,a_d)$, where
\begin{equation}\label{eqn:def_inner_distribution_general}
	a_i = \frac{1}{|Y|}\sum_{x,y \in Y} A_i(x,y),
\end{equation}
and the dual distribution of $Y$ is the tuple $(a'_0,\ldots,a'_d)$, where
\begin{equation}\label{eqn:def_dual_distribution_general}
	a'_k = \frac{|X|}{|Y|}\sum_{x,y \in Y} E_k(x,y).
\end{equation}
It is immediate that the inner distribution is non-negative. The same holds for the dual distribution. Let $\mathbbm{1}_Y \in \C(X)$ be the characteristic vector of $Y$, so $\mathbbm{1}_Y(x) = 1$ if $x \in Y$ and $\mathbbm{1}_Y(x) = 0$ otherwise. Since the matrix $E_k$ is Hermitian, it follows that
\begin{equation}\label{eqn:dual_distr_orthogonality_general}
	\frac{|Y|}{|X|} a'_k = \mathbbm{1}_Y^\top E_k \mathbbm{1}_Y = \mathbbm{1}_Y^* E_k^* E_k \mathbbm{1}_Y = ||E_k \mathbbm{1}_Y||^2.
\end{equation}
Thus, the dual distribution is real and non-negative. Furthermore, the case $a'_k = 0$ occurs if and only if $\mathbbm{1}_Y$ is orthogonal to $V_k$.\par
As both $A_0,\ldots,A_d$ and $E_0,\ldots,E_d$ are bases of $\mathbb{A}$, there are numbers $P_i(k),Q_k(i) \in \mathbb{C}$ such that
\[
    A_i = \sum_{k=0}^d P_i(k) E_k \quad \text{ and } \quad E_k = \frac{1}{|X|}\sum_{i=0}^d Q_k(i) A_i.
\]
Since the matrices $E_k$ are pairwise orthogonal idempotents, the first equation shows that $P_i(k)$ is an eigenvalue of $A_i$ for every $i \in \{0,\ldots,d\}$. The matrix $P = (P_j(i))_{i,j=0,\ldots,d}$ is called the \emph{matrix of eigenvalues} of the association scheme. Writing $Q = (Q_j(i))_{i,j=0,\ldots,d}$, we find that $PQ = |X|I$. The matrix $Q$ is called the \emph{matrix of dual eigenvalues} of the association scheme.

Using the dual eigenvalues, we can give another formula for the dual distribution of a subset $Y$. It is the $Q$-transform of the inner distribution, that is
\begin{equation}\label{eqn:dual_distr_Q_transform}
    a'_k = \sum_{i=0}^n Q_k(i)a_i.
\end{equation}
Thus, knowledge of the dual eigenvalues is crucial for computing the dual distribution.\par
The main reason why the theory of association schemes is such a powerful tool in combinatorics, is the observation that interesting combinatorial structures can often be characterised using the inner or dual distribution. Delsarte \cite{Del1973} calls these objects \emph{cliques} and \emph{designs}, respectively.

\begin{Definition}
Let $Y$ be a non-empty subset of $X$ with inner distribution $(a_0,\ldots,a_d)$ and dual distribution $(a'_0,\ldots,a'_d)$.
\begin{enumerate}
\item[(a)]{
Let $D \subseteq \{1,\ldots,d\}$. We call $Y$ a \emph{$D$-clique} or a \emph{$D$-code} if $a_d = 0$ for every $d \not\in D$.
}
\item[(b)]{
Let $T \subseteq \{1,\ldots,d\}$. We call $Y$ a \emph{$T$-design} if $a'_t = 0$ for every $t \in T$.
}
\end{enumerate}
\end{Definition}

To match the definition of $T$-design with the expression we used to describe our main theorem, the reader will need the customary notion of the \emph{dual degree set} of a set. The dual degree
set of $Y$ is the set of indices $j\in\{1,\ldots,d\}$ such that $E_j \mathbbm{1}_Y\ne 0$.
Therefore, $Y$ is a $T$-design if and only if its dual degree set is disjoint from $T$.

\subsection{Group algebras}

The theory of complex group algebras will prove useful to derive results about association schemes. Denote by $\mathbb{C}[G]$ the complex group algebra of $G$. It is the vector space $\mathbb{C}^{G}$ of complex functions on $G$ together with convolution as multiplication. The group algebra $\mathbb{C}[G]$ is generated by the basis $(e_\sigma)_{\sigma \in G}$. We can think of a function $f:G \to \mathbb{C}$ as the formal sum
\[
    f = \sum_{\sigma \in G} f(\sigma) e_\sigma.
\]
Going the other way, we can think of an element $f \in \mathbb{C}[G]$ as a function from $G$ to $\mathbb{C}$ where $f(\sigma)$ is the coefficient of $e_\sigma$ when $f$ is written in terms of the basis $(e_\sigma)_{\sigma \in G}$. Since every character $\chi$ of $G$ is a function from $G$ to $\mathbb{C}$, every character is an element of the group algebra $\mathbb{C}[G]$. The multiplication of two elements $f,g \in \mathbb{C}[G]$ is defined as
\[
    (f \ast g)(\sigma) = \sum_{\tau \in G} f(\sigma\tau^{-1})g(\tau).
\]
Let $H$ be a subgroup of $G$. Denote by $e_{H}$ the element
\[
    e_{H} = \frac{1}{|H|}\sum_{h \in H} e_{h}.
\]
Then $e_H\ast e_H = e_H$ and $e_H$ is an idempotent of the group algebra. As a function from $G$ to $\mathbb{C}$, we have $e_H(\sigma) = |H|^{-1}$ if $\sigma \in H$ and $e(\sigma) = 0$ otherwise. Inside $\mathbb{C}[G]$, we can investigate the subalgebra $e_H \ast \mathbb{C}[G] \ast e_H$. As convolution of a function $f: G \to \mathbb{C}$ with $e_H$ roughly corresponds to averaging $f$ over $H$ (from the left or from the right), we find that $e_H \ast \mathbb{C}[G] \ast e_H$ is the algebra of functions $f: G \to \mathbb{C}$ such that
\[
    f(h\sigma h') = f(\sigma) \quad \text{ for all } h,h' \in H,
\]
that is, the functions that are both left- and right-invariant under multiplication by elements of $H$. Thus, $e_H \ast \mathbb{C}[G] \ast e_H$ is the algebra of functions from $G$ to $\mathbb{C}$ that are constant on the double cosets $H \sigma H$ of $G$. This algebra will be denoted by $\mathbb{C}[H \backslash G /H]$. It inherits the inner product $\langle \cdot,\cdot \rangle$ from $\mathbb{C}[G]$.

\subsection{Gelfand Pairs}

We refer the reader to \cite[Ch. VII.1]{Mac1995} for an introduction to Gelfand pairs. A (finite) \emph{Gelfand pair} is a pair of a group $G$ and a subgroup $H \le G$ such that the group algebra $\mathbb{C}[H \backslash G / H]$ is commutative.  Equivalently, $(G,H)$ is a Gelfand pair if and only if the induced character $1\uparrow_H^G$ of $H$ is multiplicity-free (by Schur's Lemma). We will only work with finite Gelfand pairs, hence the following theory assumes that $G$ is finite.

Every Gelfand pair $(G,H)$ gives rise to an association scheme by considering the transitive action of $G$ on the (left) cosets of $H$, and taking the orbits of $G$ on pairs for the relations. Letting $G = S_{2n}$ and $H = S_2 \wr S_n$, we have that
\[
    1\uparrow_{S_2 \wr S_n}^{S_{2n}} = \sum_{\lambda \vdash n} \chi^{2\lambda},
\]
hence $(S_{2n},S_2 \wr S_n)$ is a Gelfand pair. We refer the reader to \cite[Ch. VII.2]{Mac1995} for an excellent treatment of this Gelfand pair. In spirit with the theory of Coxeter groups, we denote the subgroup $S_2 \wr S_n$ by $B_n$. We have that $|B_n| = 2^n n!$. The subgroup $B_n$ is not a normal subgroup of $S_{2n}$ so $S_{2n}/B_n$ is not a group and hence we cannot work directly with characters and permutation representations. However, every Gelfand pair has so called \emph{zonal spherical functions} that behave very similarly to irreducible characters, and many calculations with irreducible characters in a group can be mimicked with zonal spherical functions in a Gelfand pair.

The zonal spherical functions of the Gelfand pair $(S_{2n},B_n)$ are functions from $S_{2n}$ to the complex numbers. 
They are indexed by the partitions of $n$. 
In the group algebra $\mathbb{C}[S_{2n}]$, the zonal spherical functions $\omega^\lambda$ are given by
\[
    \omega^\lambda = \chi^{2\lambda}\ast e_{B_n} = e_{B_n} \ast \chi^{2\lambda}
\]
where $\chi^{2\lambda}$ is the irreducible character of $S_{2n}$ indexed by the partition $2\lambda$. From the definition, it is immediate that the zonal spherical functions are constant on the double cosets $B_n \sigma B_n$ so we find that $\omega^\lambda \in \mathbb{C}[B_n \backslash S_{2n} / B_n]$. The double cosets are indexed by partitions of $n$ and we denote the value of the zonal spherical function $\omega^\lambda$ on the double coset of type $\rho$ by $\omega^\lambda_\rho$. Explicitly, the value of $\omega^\lambda$ on an element $\sigma \in S_{2n}$ is given by
\[
    \omega^\lambda (\sigma) = \frac{1}{|B_n|} \sum_{b \in B_n} \chi^{2\lambda}(\sigma b).
\]
Following Macdonald \cite[Ch. VII.2]{Mac1995}, we use the inner product
\[
    \langle f,g \rangle = \sum_{\sigma \in S_{2n}} f(\sigma)\overline{g(\sigma)}
\]
for $f,g \in    \mathbb{C}[B_n \backslash S_{2n} / B_n]$, omitting the scaling by $|S_{2n}|$. The zonal spherical functions have the following properties.

\begin{Theorem}[{cf. \cite[Ch. VII (1.4)]{Mac1995}}]
Let $n \in \mathbb{N}$ and $\lambda,\mu \vdash n$. Then,
\begin{enumerate}[(i)]
    \item {
        $\omega^{(n)} = 1$
    };
    \item {
        $\omega^\lambda(\sigma^{-1}) = \omega^\lambda(\sigma)$ for all $\sigma\in S_{2n}$
    };
    \item {
        $\langle \omega^\lambda,\omega^\mu \rangle = \delta_{\lambda\mu} (2n)!\chi^{2\lambda}(1)^{-1}$
    };
    \item {
        $\omega^\lambda \ast \omega^\mu = \delta_{\lambda\mu} c_\lambda \omega^\lambda$ for some $c_\lambda \neq 0$
    };
    \item{
        The zonal spherical functions $(\omega^\lambda)_{\lambda \vdash n}$ form an orthogonal basis of $\mathbb{C}[B_n \backslash S_{2n} / B_n]$.
    }
\end{enumerate}    
\end{Theorem}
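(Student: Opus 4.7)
The plan is to work inside the group algebra $\mathbb{C}[S_{2n}]$, exploiting the two forms of the defining identity $\omega^\lambda = \chi^{2\lambda}\ast e_{B_n} = e_{B_n}\ast\chi^{2\lambda}$. Three ingredients drive the whole argument: (a) $e_{B_n}$ is an idempotent; (b) irreducible characters are class functions and hence lie in the centre of $\mathbb{C}[S_{2n}]$, so they commute with $e_{B_n}$ and with one another; and (c) the Schur-type convolution identity $\chi^{2\lambda}\ast\chi^{2\mu} = \delta_{\lambda\mu}\frac{(2n)!}{\chi^{2\lambda}(1)}\chi^{2\lambda}$ for irreducible characters of any finite group. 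Throughout I will freely use that symmetric-group characters are integer-valued, so $\chi^{2\lambda}(\sigma^{-1})=\chi^{2\lambda}(\sigma)$.

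Items (i) and (ii) are warm-ups. For (i), note that $2(n)=(2n)$ indexes the trivial character of $S_{2n}$, so $\omega^{(n)} = 1_{S_{2n}}\ast e_{B_n}$, and a one-line evaluation of the convolution at any $\sigma\in S_{2n}$ returns the constant function $1$. For (ii), expand $\omega^\lambda(\sigma^{-1})$ using the explicit averaging formula
\[
    \omega^\lambda(\sigma) \;=\; \frac{1}{|B_n|}\sum_{b\in B_n}\chi^{2\lambda}(\sigma b),
\]
then apply reality, the class-function identity $\chi^{2\lambda}(bg)=\chi^{2\lambda}(gb)$, and the change of variable $b\mapsto b^{-1}$ to recover $\omega^\lambda(\sigma)$.

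The heart of the theorem is (iv), from which (iii) will follow. Using associativity, idempotence of $e_{B_n}$, and centrality of $\chi^{2\mu}$,
\[
    \omega^\lambda\ast\omega^\mu \;=\; \chi^{2\lambda}\ast e_{B_n}\ast e_{B_n}\ast\chi^{2\mu} \;=\; \chi^{2\lambda}\ast e_{B_n}\ast\chi^{2\mu} \;=\; (\chi^{2\lambda}\ast\chi^{2\mu})\ast e_{B_n},
\]
and invoking (c) gives $\omega^\lambda\ast\omega^\mu = \delta_{\lambda\mu}\tfrac{(2n)!}{\chi^{2\lambda}(1)}\omega^\lambda$, which is (iv) with $c_\lambda = (2n)!/\chi^{2\lambda}(1)\ne 0$. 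For (iii) I use the standard identity $\langle f,g\rangle = |G|^{-1}(f\ast\check g)(1)$, where $\check g(\sigma)=\overline{g(\sigma^{-1})}$; by (ii) together with the reality of $\omega^\mu$ we have $\check{\omega^\mu}=\omega^\mu$, so $\langle\omega^\lambda,\omega^\mu\rangle$ collapses via (iv) to $\delta_{\lambda\mu}\chi^{2\lambda}(1)^{-1}\omega^\lambda(1)$. It therefore suffices to compute $\omega^\lambda(1) = \tfrac{1}{|B_n|}\sum_{b\in B_n}\chi^{2\lambda}(b) = \langle\chi^{2\lambda}\downarrow^{S_{2n}}_{B_n},1_{B_n}\rangle_{B_n}$, which by Frobenius reciprocity and the multiplicity-one decomposition $1\uparrow^{S_{2n}}_{B_n} = \sum_{\lambda\vdash n}\chi^{2\lambda}$ recalled in the paper equals $1$.

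Finally, (v) is formal. Membership $\omega^\lambda\in\mathbb{C}[B_n\backslash S_{2n}/B_n]$ is visible from the symmetric form $\omega^\lambda = e_{B_n}\ast\chi^{2\lambda}\ast e_{B_n}$ (valid by (b)); orthogonality and non-vanishing from (iii) force linear independence of the $p(n)$ zonal spherical functions; and since the dimension of $\mathbb{C}[B_n\backslash S_{2n}/B_n]$ equals the number of $(B_n,B_n)$-double cosets, which by the Gelfand-pair hypothesis matches the number of irreducible constituents of $1\uparrow^{S_{2n}}_{B_n}$ and hence equals $p(n)$, the $\omega^\lambda$ must span. The only mildly delicate step I anticipate is the centrality/idempotence bookkeeping in (iv) together with the identification $\omega^\lambda(1)=1$ via Frobenius reciprocity; everything else reduces to routine manipulations in $\mathbb{C}[S_{2n}]$.
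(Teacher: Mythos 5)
Your proposal is correct, but note that the paper does not prove this statement at all: it is imported verbatim from Macdonald (Ch.\ VII (1.4)), where it is established in the general framework of Gelfand pairs. Your argument is a legitimate self-contained reconstruction inside $\mathbb{C}[S_{2n}]$, and every step checks out: the element $\chi^{2\mu}$ is indeed central (class functions are exactly the centre of the group algebra, which also justifies the two forms of the definition of $\omega^\lambda$), the convolution identity $\chi^{2\lambda}\ast\chi^{2\mu}=\delta_{\lambda\mu}\frac{(2n)!}{\chi^{2\lambda}(1)}\chi^{2\lambda}$ is the standard orthogonality relation, and the evaluation $\omega^\lambda(1)=\langle \chi^{2\lambda}\downarrow_{B_n},1_{B_n}\rangle=1$ via Frobenius reciprocity uses only the multiplicity-free decomposition $1\uparrow_{B_n}^{S_{2n}}=\sum_{\lambda\vdash n}\chi^{2\lambda}$, which the paper itself takes as given before stating the theorem; your (iii) and (iv) thus come out with the right constants, and in particular $c_\lambda=(2n)!/\chi^{2\lambda}(1)\neq 0$. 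What your route buys is that only two external inputs are needed (the doubled-partition decomposition and basic character theory), whereas Macdonald's treatment develops the spherical-function formalism abstractly. One small remark on (v): your dimension count invokes the general fact that $\dim e_H\mathbb{C}[G]e_H=\sum_\chi\langle 1\uparrow_H^G,\chi\rangle^2$; you could avoid even this by citing the fact, stated in the paper with reference to Macdonald (Ch.\ VII (2.1)), that the $(B_n,B_n)$-double cosets are indexed by the partitions of $n$, so that $\dim\mathbb{C}[B_n\backslash S_{2n}/B_n]=p(n)$ directly, and the $p(n)$ pairwise orthogonal nonzero functions $\omega^\lambda$ must form a basis.
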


So the zonal spherical functions can be thought of as analogues of the irreducible characters of a group.

\subsection{The perfect matching association scheme}

We now describe the association scheme obtained from the Gelfand pair $(S_{2n},B_n)$ in more detail.
(Some of what we cover below can be found in \cite[\S4]{Lindzey}.)
The base set of the association scheme is the coset space $S_{2n} /B_n$. As $B_n$ is the stabiliser of a perfect matching of $2n$ points, we find that the cosets of $B_n$ are in bijection with the perfect matchings of the complete graph on the vertices $1,\ldots,2n$. We can thus think of cosets of $B_n$ as perfect matchings and vice versa.

Denote by $m^*$ the perfect matching of the vertices $2i-1$ and $2i$ for $i = 1, \ldots,n$. This can be thought of as a \emph{base matching} or \emph{identity matching}. It corresponds to the coset of the identity, so $m^\ast = B_n$. The relations of the scheme are indexed by partitions of $n$. Two perfect matchings $m,\widetilde{m}$ are in relation $R_\lambda$ (denoted by $d(m,\widetilde{m}) = \lambda)$ if the (disjoint) union of $m$ and $\widetilde{m}$ consists of even cycles of lengths $2\lambda_1,2\lambda_2,\dots$. Figure \ref{ex:perfect matchings} gives an example.

The set
\[
    \Omega_\lambda = \{ \sigma B_n \; | \; d(m^*,\sigma B_n) = \lambda \}
\]
is called the \emph{$\lambda$-sphere}. We have that
\[
    d(\sigma B_n,\tau B_n) = \lambda \,\Longleftrightarrow\, d(B_n,\sigma^{-1}\tau B_n) = \lambda \,\Longleftrightarrow\, \sigma^{-1}\tau B_n \in \Omega_\lambda.
\]
If $\sigma \in S_{2n}$ such that $\sigma B_n \in \Omega_\lambda$, we say that $\sigma$ has \emph{coset type} $\lambda$. This is justified by the fact that $\sigma$ and $\tau$ have the same coset type if and only if $\tau \in B_n \sigma B_n$ (see \cite[Ch. VII (2.1)]{Mac1995}). The double cosets are indexed by partitions of $n$. The reader should note that the zonal spherical functions are constant on the $\lambda$-spheres just like the irreducible characters of a group are constant on the conjugacy classes.
\begin{center}
\begin{figure}[!ht]
\begin{tikzpicture}
\coordinate (m1) at (-4,0) {};
\coordinate (union) at (0,0) {};
\coordinate (m2) at (4,0) {};

\foreach \x in {1,...,8}{
\node[draw,circle,minimum size=1.3em,inner sep=0pt] (a\x) at ($(m1)+(157.5-45*\x:1.3)$) {\x};
\node[draw,circle,minimum size=1.3em,inner sep=0pt] (b\x) at ($(m2)+(157.5-45*\x:1.3)$) {\x};
};

\draw[color=blue,ultra thick] (a1) -- (a2);
\draw[color=blue,ultra thick] (a3) -- (a6);
\draw[color=blue,ultra thick] (a4) -- (a8);
\draw[color=blue,ultra thick] (a5) -- (a7);

\draw[color=red,ultra thick] (b1) -- (b2);
\draw[color=red,ultra thick] (b3) -- (b8);
\draw[color=red,ultra thick] (b4) -- (b5);
\draw[color=red,ultra thick] (b6) -- (b7);

\foreach \x in {1,...,8}{
\node[draw,circle,minimum size=1.3em,inner sep=0pt] (u\x) at ($(union)+(157.5-45*\x:1.3)$) {\x};
};

\draw[color=blue,ultra thick] (u1) to [bend left] (u2);
\draw[color=blue,ultra thick] (u3) -- (u6);
\draw[color=blue,ultra thick] (u4) -- (u8);
\draw[color=blue,ultra thick] (u5) -- (u7);

\draw[color=red,ultra thick] (u1) to [bend right] (u2);
\draw[color=red,ultra thick] (u3) -- (u8);
\draw[color=red,ultra thick] (u4) -- (u5);
\draw[color=red,ultra thick] (u6) -- (u7);

\node at (0,-2) {$C_6 \cup C_2 \; \longrightarrow \;$ type $(31)$};
\end{tikzpicture}
\caption{Two perfect matchings and their union.}\label{ex:perfect matchings}
\end{figure}
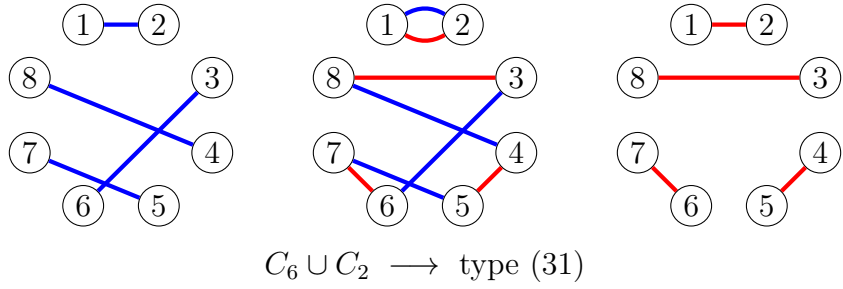
\end{center}

\subsection{Eigenvalues of the perfect matching association scheme}

Knowledge of the (dual) eigenvalues of an association scheme is of crucial importance when investigating the dual distribution and designs. Godsil and Meagher \cite{GodMea2016} have computed the eigenvalues of an association scheme obtained from a finite Gelfand pair. We repeat the result here.

\begin{Theorem}[{\cite[Cor. 13.8.2]{GodMea2016}}]\samepage
    Let $(G,H)$ be a finite Gelfand pair and $\mathbb{A}$ be the association scheme obtained from the action of $G$ on the left cosets of $H$. The matrix idempotents of $\mathbb{A}$ are $E_\phi$ where the entries are given by
    \[
        E_\phi(x_iH,x_jH) = \frac{\phi(1)}{|G|}\sum_{\sigma \in H} \phi(x_i \sigma x_j ^{-1})
    \]
    and $\phi$ is an irreducible constituent of $1 \uparrow_{H}^{G}$.
\end{Theorem}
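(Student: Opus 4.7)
The strategy is to realise the minimal idempotents of the Bose--Mesner algebra $\mathbb{A}$ as the orthogonal projections onto the irreducible $G$-submodules of the permutation module $V=\mathbb{C}[G/H]$, and then to write each such projection explicitly as the action of a central idempotent from $\mathbb{C}[G]$.

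First I would argue that $\mathbb{A}$ coincides with the commutant $\mathrm{End}_G(V)$. The adjacency matrices $D_0,\ldots,D_d$ commute with the $G$-action because the relations are precisely the $G$-orbits on $V\times V$, which gives $\mathbb{A}\subseteq\mathrm{End}_G(V)$; a standard orbit count shows that $\dim\mathrm{End}_G(V)$ equals the number of orbits on $V\times V$, namely $d+1$, so the inclusion is an equality. Because $(G,H)$ is a Gelfand pair, $1\uparrow_H^G$ is multiplicity-free, and hence $V=\bigoplus_\phi V_\phi$ with the $V_\phi$ irreducible and pairwise non-isomorphic, the sum ranging over the irreducible constituents of $1\uparrow_H^G$. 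By Schur's lemma each element of $\mathrm{End}_G(V)$ acts as a scalar on every $V_\phi$; the unique family of pairwise orthogonal Hermitian idempotents of $\mathbb{A}$ summing to the identity is therefore the family of orthogonal projectors $E_\phi$ onto the components $V_\phi$.

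Next I would compute $E_\phi$ using the fact that the projection onto the $\phi$-isotypic component of any $\mathbb{C}[G]$-module is given by left multiplication by the central idempotent
\[
    e_\phi \;=\; \frac{\phi(1)}{|G|}\sum_{g\in G}\phi(g^{-1})\,g.
\]
Applying $e_\phi$ to a basis vector $[xH]\in V$, substituting $y=gx$, grouping terms coset-by-coset (writing $y=zh$ with $h\in H$), and finally reindexing $h\mapsto \sigma^{-1}$ gives
\[
    e_\phi\cdot [xH] \;=\; \sum_{zH}\;\Bigl(\tfrac{\phi(1)}{|G|}\sum_{\sigma\in H}\phi(x\sigma z^{-1})\Bigr)\,[zH].
\]
Reading off the coefficient in position $(zH,xH)$ and invoking the Hermitian symmetry of $E_\phi$ (equivalently, that $\phi(g^{-1})=\overline{\phi(g)}$) converts this into the displayed formula with the indices $x_i,x_j$ in the stated order.

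The main obstacle is purely bookkeeping: tracking the inversions between $g$ and $g^{-1}$ and between $\phi$ and $\overline{\phi}$, and correctly reindexing when a sum over $G$ is split into a sum over cosets times a sum over $H$. The conceptual content -- that $\mathbb{A}$ is the $G$-commutant in the multiplicity-free situation, and that $e_\phi$ realises the isotypic projector -- is standard, so essentially all of the real work lies in ensuring the chain of substitutions produces exactly the claimed expression $\phi(x_i\sigma x_j^{-1})$.
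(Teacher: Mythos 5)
The paper does not actually prove this statement: it is quoted verbatim from Godsil and Meagher \cite[Cor.~13.8.2]{GodMea2016}, so there is no internal proof to compare against. Your argument is the standard one, and is essentially the route taken in the cited source: identify the Bose--Mesner algebra with the commutant $\mathrm{End}_G(\mathbb{C}[G/H])$ by an orbit count, use the Gelfand-pair hypothesis (multiplicity-freeness) together with Schur's lemma to see that the minimal idempotents are exactly the orthogonal projectors onto the irreducible summands, and compute those projectors through the central idempotents $e_\phi$ of $\mathbb{C}[G]$. All of these steps are sound.

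One wrinkle in your last step deserves attention. Your computation of $e_\phi\cdot[xH]$ gives the entry $\frac{\phi(1)}{|G|}\sum_{\sigma\in H}\phi(x_j\sigma x_i^{-1})$ in position $(x_iH,x_jH)$, and the substitution $\sigma\mapsto\sigma^{-1}$ (equivalently $\phi(g^{-1})=\overline{\phi(g)}$, which is what your appeal to Hermitian symmetry amounts to) turns this into $\frac{\phi(1)}{|G|}\sum_{\sigma\in H}\overline{\phi(x_i\sigma x_j^{-1})}$, i.e.\ the displayed formula with $\overline{\phi}$ in place of $\phi$, not with $\phi$ itself. The two agree only when the relevant values are real (equivalently, for symmetric Gelfand pairs); for a general Gelfand pair they can differ, as the toy example $(C_3,\{1\})$ already shows. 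The theorem as stated survives because $1\uparrow_{H}^{G}$ is a permutation character, hence real-valued, so $\overline{\phi}$ is again a constituent and the \emph{set} of matrices $\{E_\phi\}$ is unchanged --- only the labelling is conjugated; alternatively one can check directly that the displayed matrices are Hermitian idempotents lying in $\mathbb{A}$, pairwise orthogonal, and summing to $I$, which forces them to be the minimal idempotents. For the application in this paper, where $G=S_{2n}$ has only real-valued characters, the issue is vacuous, but you should either add the relabelling remark or assume real characters rather than attributing the index swap to Hermitian symmetry alone.
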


\begin{Corollary}
    The idempotents of the perfect matching association scheme are given by
    \[
        E_\mu(x_iB_n,x_jB_n) = \frac{\chi^{2\mu}(1)}{(2n)!}\sum_{\sigma \in B_n} \chi^{2\mu}(x_i\sigma x_j^{-1}).
    \]
    The $(x_iB_n,x_jB_n)$-entry of $E_\mu$ is equal to $Q_\mu(\rho)/(2n-1)!!$ where $d(x_iB_n,x_jB_n) = \rho$, equivalently $x_i^{-1}x_j$ has coset type $\rho$. We obtain
    \[
        Q_\mu(\rho) = \chi^{2\mu}(1) \omega^\mu_\rho.
    \]
\end{Corollary}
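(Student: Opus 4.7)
The plan is to apply the Godsil--Meagher theorem just quoted directly to the Gelfand pair $(S_{2n}, B_n)$, and then match the output against the general expansion $E_\mu = \frac{1}{|X|}\sum_i Q_\mu(i) D_i$ to read off $Q_\mu(\rho)$. The first displayed formula of the corollary is essentially a restatement: since the irreducible constituents of $1\uparrow_{B_n}^{S_{2n}}$ are exactly the $\chi^{2\mu}$ with $\mu \vdash n$ (by the decomposition given earlier), substituting $\phi = \chi^{2\mu}$, $G = S_{2n}$, $H = B_n$ into the theorem yields the formula verbatim.

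The second assertion requires identifying the inner sum as a zonal spherical function value. I would use that $\chi^{2\mu}$ is a class function to rewrite $\chi^{2\mu}(x_i \sigma x_j^{-1}) = \chi^{2\mu}(x_j^{-1} x_i \sigma)$, so that
$$\sum_{\sigma \in B_n} \chi^{2\mu}(x_i \sigma x_j^{-1}) = \sum_{\sigma \in B_n}\chi^{2\mu}((x_j^{-1} x_i)\sigma) = |B_n|\, \omega^\mu(x_j^{-1} x_i)$$
by the defining formula $\omega^\mu(\tau) = |B_n|^{-1}\sum_{b\in B_n}\chi^{2\mu}(\tau b)$. If $d(x_iB_n, x_jB_n) = \rho$ then $x_i^{-1} x_j$ has coset type $\rho$; invoking the inversion symmetry $\omega^\mu(\sigma^{-1}) = \omega^\mu(\sigma)$ from the listed properties of zonal spherical functions, the value at $x_j^{-1} x_i$ is also $\omega^\mu_\rho$. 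Combining this with $|B_n|/(2n)! = 1/(2n-1)!!$ gives
$$E_\mu(x_iB_n, x_jB_n) = \frac{\chi^{2\mu}(1)\, \omega^\mu_\rho}{(2n-1)!!}.$$

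Finally, to extract $Q_\mu(\rho)$, I would compare this with the general formula $E_\mu = \frac{1}{|X|}\sum_i Q_\mu(i) D_i$. Here $|X| = [S_{2n} : B_n] = (2n-1)!!$ and $D_\rho$ is the unique adjacency matrix with a $1$ in the $(x_iB_n, x_jB_n)$-entry, so that $E_\mu(x_iB_n, x_jB_n) = Q_\mu(\rho)/(2n-1)!!$. Matching coefficients yields $Q_\mu(\rho) = \chi^{2\mu}(1)\,\omega^\mu_\rho$. The argument is essentially a bookkeeping exercise around the Godsil--Meagher formula; the only small subtlety to watch for is remembering to invoke $\omega^\mu(\sigma^{-1}) = \omega^\mu(\sigma)$ when passing between $x_i^{-1} x_j$ and $x_j^{-1} x_i$.
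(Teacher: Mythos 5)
Your proposal is correct and follows essentially the same route as the paper: use the class-function property to rewrite $\chi^{2\mu}(x_i\sigma x_j^{-1})=\chi^{2\mu}(x_j^{-1}x_i\sigma)$, recognise the sum as $|B_n|\,\omega^\mu(x_j^{-1}x_i)$, simplify $|B_n|/(2n)!=1/(2n-1)!!$, and pass from $x_j^{-1}x_i$ to $x_i^{-1}x_j$ (the paper notes they have the same coset type; your appeal to $\omega^\mu(\sigma^{-1})=\omega^\mu(\sigma)$ is an equivalent justification). Your explicit final step of matching against $E_\mu=\frac{1}{|X|}\sum_\rho Q_\mu(\rho)D_\rho$ with $|X|=(2n-1)!!$ is the same bookkeeping the paper leaves implicit.
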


\begin{proof}
    Notice that
    \begin{align*}
     \frac{\chi^{2\mu}(1)}{(2n)!}\sum_{\sigma \in B_n} \chi^{2\mu}(x_i\sigma x_j^{-1}) &= \frac{\chi^{2\mu}(1)}{(2n)!}\sum_{\sigma \in B_n} \chi^{2\mu}(x_j^{-1}x_i\sigma ) \\
     &=\frac{\chi^{2\mu}(1) |B_n|}{(2n)!}\omega^\mu(x_j^{-1}x_i) \\
     &= \frac{\chi^{2\mu}(1)}{(2n-1)!!}\omega^\mu(x_j^{-1}x_i).
    \end{align*}
    The value $\omega^\mu$ only depends on the coset type of $x_j^{-1}x_i$ which has the same coset type as $(x_j^{-1}x_i)^{-1}=x_i^{-1}x_j$.
\end{proof}

Using this corollary and Equation \eqref{eqn:dual_distr_Q_transform}, we obtain a formula for the dual distribution. The dual distribution of a non-empty set $Z \subseteq S_{2n}/B_n$ is given by
\[
    a'_\mu (Z) = \sum_{\rho \,\vdash\, n} Q_\mu(\rho) a_\rho(Z) = \chi^{2\mu}(1)  \sum_{\rho\,\vdash\,n} \omega^\mu_\rho a_\rho(Z).
\]
Hence,
\[
    a'_\mu(Z) = 0 \;\Longleftrightarrow\; \sum_{\rho\,\vdash\,n} \omega^\mu_\rho a_\rho(Z) = 0.
\]

\section{Designs}\label{section:designs}

In this section, we introduce a generalisation of hyperfactorisations of $K_{2n}$.

\begin{Definition}
Let $\lambda \,\vdash\, n$ and consider all set partitions of shape $2\lambda$. A set $D\neq \varnothing$ of perfect matchings is called a \emph{$\lambda$-factorisation} if there exists a constant $c>0$ such that for every set partition $P$ of shape $2\lambda$ there are exactly $c$ matchings $m \in D$ such that $m$ refines $P$. The constant $c$ is called the \emph{index} of $D$.
\end{Definition}

\begin{example}\label{example:lambda-factorisations}
    For every $n$, an $(n-1,1)$-factorisation with index 1 is just a 1-factorisation,
    and an $(n-2,1,1)$-factorisation is a hyperfactorisation. 
    Cameron's examples \cite{Cam1976} arising from hyperovals of projective planes are $(n-2,1,1)$-factorisations of index 1 where
    $2n=2^a+2$ for some integer $a\ge 3$.
    Indeed, Cameron's $s$-$(2,2n)$ partition systems are precisely
    the $(n-s,1,1,\ldots,1)$-factorisations with index 1.\\
    For every $n$ and $t \le n/2$, an $(n-t,t)$-factorisation is a set $D \neq \varnothing$ of perfect matchings such that for every subset $S \subseteq \{1,\ldots,2n\}$ of size $2t$, there are a constant number of perfect matchings in $D$ that have $t$ edges that all lie inside of $S$. Hence, every subset of size $2t$ is covered constantly often by the perfect matchings of $D$.
\end{example}

A hyperoval $\mathcal{O}$ of a projective plane $\Pi$ of order $q$ is a set
of $q+2$ points such that every line of $\Pi$ intersects $\mathcal{O}$ in 0 or 2 points.
Each point $P$ not in $\mathcal{O}$ defines a fixed-point-free permutation of $\mathcal{O}$,
taking an element $X$ to the unique element $X^P$ of $\mathcal{O}\backslash\{X\}$ lying on the line
spanned by $P$ and $X$. The set of $q^2-1$ permutations of $\mathcal{O}$ arising this
way gives rise to an \emph{abstract hyperoval}. Indeed, each permutation
is a complete product of 2-cycles, and so the set of perfect matchings that arise is identical
to Cameron's construction. However, not all abstract hyperovals arise from a hyperoval.

\begin{example}\label{example:abstracthyperovals}
Abstract hyperovals were introduced by J. G. Thompson in \cite{Thompson}. An
\emph{abstract hyperoval} $A(X)$ on a finite set $X$ is a set of fixed-point-free involutions on $X$ with the following property: for any four elements $a,b,c,d\in X$, there is a unique $u\in A(X)$ such that $a^u=b$ and $c^u=d$. If $|X|=n+2$, then we say that $A(X)$ has \emph{order $n$}, and we see from the definition that $n$ is even and $|A(X)|=n^2-1$. De Clerck \cite{DeClerck1979} showed that an abstract hyperoval of order $2s$ is equivalent to a partial geometry with parameters $\mathrm{pg}(s, 2s - 2, s - 1)$. We refer to \cite{Cooper} and \cite{Prince} for more on this connection.

Now a perfect matching defines a fixed-point-free involution, and vice versa, because a fixed-point-free involution is a complete product of disjoint 2-cycles. So if $n$ is even,
a $2$-$(2,n+2)$ partition system is a set of $n^2-1$ perfect matchings of $K_{n+2}$ such that every
pair of disjoint edges $\{a,b\}$ and $\{c,d\}$ lies in exactly one perfect matching. So we
see readily that abstract hyperovals of order $n$ and $2$-$(2,n+2)$ partition systems are equivalent 
objects. This observation generalises \cite[Theorem 7.3]{Cam1976}, and can also
be deduced from \cite[Chapter 7]{Cooper} and \cite[Lemma 1]{Faina1984}. 

For $n=2,4$ the only abstract hyperovals are obtained
by taking the set of all fixed-point-free involutions. There is no example for
$n=6$ (see \cite{Mathon}) and there are precisely two examples for $n=8$ (see also \cite{Mathon}).
So there are only two $2$-$(2,10)$ partition systems up to equivalence, and they have
stabilisers $\mathrm{P\Gamma L}(2,8)$ and $M_9:C_3$ respectively.
In our language, there are two $(3,1,1)$-factorisations of index 1. We will look at related
examples in Section \ref{section:smalln}. We remark that Thompson \cite{Thompson} explored the
use of the representation theory of $S_{2n}$ to derive restrictions on the
existence of abstract hyperovals, and we take this viewpoint further
and apply this theory to $\lambda$-factorisations. 
\end{example}

Our goal is to show the following theorem that characterises $\lambda$-factorisations as $T$-designs in an association scheme.

\begin{Theorem}\label{theorem:main1}
    Let $D \subseteq S_{2n} / B_n$ be a non-empty set of perfect matchings and $(a'_{\mu})_{\mu \,\vdash\,n}$ be its dual distribution. Then
    \[
        D \text{ is a } \lambda\text{-factorisation } \;\Longleftrightarrow \; a'_\mu = 0 \text{ for all } \mu \vdash n \text{ with } \lambda \ltriangle \mu \neq (n).
    \]
\end{Theorem}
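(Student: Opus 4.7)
The plan is to encode the $\lambda$-factorisation condition as a constraint on the isotypic components of $\mathbbm{1}_D$. Let $\mathcal{OP}_\lambda = S_{2n}/S_{2\lambda}$ denote the set of ordered set partitions of shape $2\lambda$ and define the $S_{2n}$-equivariant \emph{refinement map} $M^T\colon \mathbb{C}[\mathcal{M}] \to \mathbb{C}[\mathcal{OP}_\lambda]$ on basis vectors by $M^T(\mathbbm{1}_m)(P) = \mathbbm{1}[m \text{ refines } P]$. Because reordering blocks of $P$ of equal size does not change which matchings refine it, the condition ``$D$ is a $\lambda$-factorisation'' is equivalent to $M^T\mathbbm{1}_D$ being a constant function on $\mathcal{OP}_\lambda$; and, as $S_{2n}$ acts transitively on $\mathcal{OP}_\lambda$, these constants are precisely the $\chi^{(2n)}$-isotypic component of $\mathbb{C}[\mathcal{OP}_\lambda]$.

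Next I would decompose $\mathbbm{1}_D = \sum_{\mu \vdash n} v_\mu$ with $v_\mu \in V_\mu$. Because the Gelfand pair $(S_{2n}, B_n)$ is multiplicity-free, each $V_\mu$ is irreducible as an $S_{2n}$-module, and Schur's lemma implies that $M^T|_{V_\mu}$ is either zero or injective. Hence $D$ is a $\lambda$-factorisation if and only if $v_\mu = 0$ --- equivalently $a'_\mu = 0$ --- for every $\mu \neq (n)$ with $M^T|_{V_\mu} \neq 0$. To identify these $\mu$'s, I would invoke Young's rule: the multiplicity of $\chi^{2\mu}$ in $\mathbb{C}[\mathcal{OP}_\lambda] = 1\uparrow^{S_{2n}}_{S_{2\lambda}}$ is the Kostka number $K_{2\mu,\, 2\lambda}$, which is nonzero exactly when $\lambda \ltriangle \mu$. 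So when $\lambda \not\ltriangle \mu$ the target has no $\chi^{2\mu}$-component, forcing $M^T|_{V_\mu} = 0$ and imposing no condition on $a'_\mu$; this gives one half of the equivalence.

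The main obstacle will be proving the converse: $M^T|_{V_\mu} \neq 0$ whenever $\lambda \ltriangle \mu$ (and $\mu \neq (n)$). My plan is to evaluate $M^T$ on the zonal spherical function $\omega^\mu \in V_\mu$ at the base ordered partition $P_0 = S_{2\lambda}$. Writing $u_\mu$ for a unit $B_n$-fixed vector of $\chi^{2\mu}$ so that $\omega^\mu(g) = \langle u_\mu, \rho^{2\mu}(g) u_\mu\rangle$, the cosets $\sigma B_n$ refining $P_0$ are exactly those with $\sigma \in S_{2\lambda} B_n$, and a direct manipulation gives
\[
    (M^T\omega^\mu)(P_0) \;=\; \sum_{\sigma B_n \subseteq S_{2\lambda} B_n}\omega^\mu(\sigma) \;=\; \frac{|S_{2\lambda}|}{|S_{2\lambda} \cap B_n|}\,\|P_{S_{2\lambda}} u_\mu\|^2,
\]
where $P_{S_{2\lambda}}$ denotes the orthogonal projection of $\chi^{2\mu}$ onto its $S_{2\lambda}$-fixed subspace. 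The crux is therefore to show $P_{S_{2\lambda}} u_\mu \neq 0$ when $\lambda \ltriangle \mu$, i.e., that the (one-dimensional) $B_n$-fixed line in $\chi^{2\mu}$ is not orthogonal to the $S_{2\lambda}$-fixed subspace. I would deduce this from the zonal-polynomial description of the Gelfand pair $(S_{2n}, B_n)$ in \cite[Ch.~VII]{Mac1995}: the zonal polynomial $Z_\mu$ has a strictly triangular expansion in the monomial symmetric function basis with respect to the dominance order, and this triangularity translates, via Macdonald's identification of $\omega^\mu$ with a Jack polynomial at parameter $\alpha = 2$, into the required positivity of $\|P_{S_{2\lambda}} u_\mu\|^2$ exactly when $\lambda \ltriangle \mu$.
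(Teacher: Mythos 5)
Your proof is correct, and it reaches the theorem by a genuinely different wrapping than the paper, even though the decisive ingredient is the same. The paper works inside Delsarte's antidesign framework: it fixes the set $A$ of matchings refining one set partition of shape $2\lambda$, computes its inner distribution (Lemma \ref{lem:inner}), identifies its dual distribution with $\langle \omega^\mu,\theta^\lambda\rangle$ where $\theta^\lambda = e_{B_n}\ast 1_{2\lambda}\ast e_{B_n}$, and then invokes Delsarte's design-orthogonality criterion (using transitivity of $S_{2n}$ on each relation) to turn ``constant intersection with every translate of $A$'' into the design condition. You instead encode the whole equivalence in the $S_{2n}$-equivariant refinement map into $1\uparrow_{S_{2\lambda}}^{S_{2n}}$: Schur's lemma on the (multiplicity-free, hence irreducible) eigenspaces $V_\mu$ replaces the antidesign formalism, and Young's rule gives the vanishing of $M^T|_{V_\mu}$ for $\lambda \not\ltriangle \mu$ essentially for free, so you need no analogue of Lemma \ref{lem:inner} nor the Delsarte transitivity theorem. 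The crux, however, is identical: your quantity $\|P_{S_{2\lambda}}u_\mu\|^2$ is, up to a positive factor, $\sum_{s\in S_{2\lambda}}\omega^\mu(s)$, which is exactly the quantity the paper identifies (via $\theta^\lambda$) with the coefficient $u_{\mu,\lambda}$ of $m_\lambda$ in the zonal polynomial $Z_\mu$, and both arguments then rest on Macdonald \cite{Mac1995} (Ch.~VII (2.22), (2.23) together with Ch.~VI (10.13), (10.15)) for its strict positivity when $\lambda \ltriangle \mu$. One small caution: triangularity of $Z_\mu$ in the monomial basis only gives vanishing for $\lambda \not\ltriangle \mu$ (which you already have from Young's rule); the positivity for \emph{all} $\lambda \ltriangle \mu$ is the separate Jack-positivity statement at $\alpha=2$, so cite that explicitly rather than deducing it from ``strict triangularity''. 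In exchange for relying on the same Macdonald input, your route buys a self-contained representation-theoretic proof avoiding the antidesign machinery, while the paper's route buys the explicit dual distribution of the antidesign $A$, which fits directly into the Delsarte framework used throughout the rest of the paper.
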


\subsection{Antidesigns}

We will prove our main theorem (Theorem \ref{theorem:main1}) using the notion of \emph{antidesigns}.

\begin{Definition}
    Let $(X,R)$ be an association scheme and $A,D\subseteq X$ be non-empty subsets of $X$ with dual distributions $(a^{'A}_k)_{k=0,\ldots,d}$ and $(a^{'D}_k)_{k=0,\ldots,d}$, respectively.
    \begin{enumerate}
        \item{
            The pair $(A,D)$ is called \emph{design-orthogonal}, if $a^{'A}_k \cdot a^{'D}_k = 0$ for every $k>0$.
        }
        \item{
            Let $T = \{k \in \{1,\ldots,d\} \mid a^{'A}_k \neq 0\}$. Then we call $A$ a \emph{$T$-antidesign.}
        }
    \end{enumerate}
\end{Definition}

It is clear from the definition that if the pair $(A,D)$ is design-orthogonal and $A$ is a $T$-antidesign, then $D$ must be a $T$-design. Hence, we can characterise $T$-designs using $T$-antidesigns.
It turns out (see \cite{Delsarte77}) that if an association scheme admits an automorphism group that acts transitively on each relation $R_i$, then a pair $(A,D)$ is design-orthogonal if and only if the intersection $|D \cap \varphi(A)|$ is constant when $\varphi$ varies over all automorphisms of $(X,R)$.

Applied to the association scheme of perfect matchings, we find that $S_{2n}$ acts transitively on each of the relations $R_\mu$. Let $\lambda \vdash n$ and let $P_\lambda$ be a set partition of shape $2\lambda$. Consider the set $A$ of all perfect matchings that refine $P_\lambda$. A $\lambda$-factorisation $D$ of index $c$ intersects $A$ in precisely $c$ elements. Note that $S_{2n}$ acts transitively on the set partitions of shape $2\lambda$ and that, from the definition of a $\lambda$-factorisation, $|D \cap \varphi(A)|$ is constant for all $\varphi \in S_{2n}$ . Thus, the pair $(A,D)$ is design-orthogonal. If $A$ is a $T$-antidesign, then $D$ is a $T$-design. Hence, it suffices to compute the set $T$ such that $A$ is a $T$-antidesign.

We find that $A$ is the set of left cosets of $B_n$ with representatives in $S_{2\lambda}B_n$, where $S_{2\lambda}$ is a Young subgroup. The size of $A$ in the association scheme $S_{2n}/B_n$ is
\[
    |A| = \frac{|S_{2\lambda} B_n|}{|B_n|} = \frac{|S_{2\lambda}|}{|S_{2\lambda} \cap B_n|} =  \frac{|S_{2\lambda}|}{|B_\lambda|} = \frac{(2\lambda)!}{2^n \lambda!}
\]
where $B_\lambda = B_{\lambda_1} \times B_{\lambda_2} \times\dots\times B_{\lambda_{l(\lambda)}}$. We have that $|B_\lambda| = 2^n \lambda!$.

First, we compute the inner distribution of $A$.
\begin{Lemma}\label{lem:inner}
    The inner distribution $a_\rho(A)$ is given by
    \[
        a_\rho(A) = \frac{1}{2^n \lambda!} |S_{2\lambda} \cap B_n x_\rho B_n|
    \]
    where $x_\rho \in S_{2n}$ is any element of coset type $\rho$.
\end{Lemma}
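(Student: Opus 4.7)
My plan is to exploit the transitive action of $S_{2\lambda}$ on $A$ to collapse the double sum defining $a_\rho(A)$ to a single counting problem, and then translate the distance condition into membership in the double coset $B_n x_\rho B_n$.

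First I would fix coordinates so that the chosen set partition $P_\lambda$ is the standard one $\{\{1,\dots,2\lambda_1\},\{2\lambda_1+1,\dots,2\lambda_1+2\lambda_2\},\dots\}$; the stabiliser of this partition in $S_{2n}$ is precisely the Young subgroup $S_{2\lambda}$, and the base matching $m^* = B_n$ refines $P_\lambda$. Since a perfect matching $m = \sigma B_n$ refines $P_\lambda$ exactly when $\sigma$ maps $m^*$ into the $S_{2\lambda}$-orbit of $m^*$, we get $A = \{\sigma B_n : \sigma \in S_{2\lambda}\}$, and in particular $S_{2\lambda}$ acts transitively on $A$ preserving every relation $R_\rho$ (because $S_{2\lambda} \le S_{2n}$ and $S_{2n}$ acts by association scheme automorphisms).

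Second, from the definition
\[
    a_\rho(A) = \frac{1}{|A|}|\{(x,y) \in A \times A : d(x,y) = \rho\}|,
\]
the transitivity of $S_{2\lambda}$ on $A$ together with the fact that each element of $S_{2\lambda}$ preserves $R_\rho$ implies that $|\{y \in A : d(x,y) = \rho\}|$ is independent of $x \in A$. Hence $a_\rho(A) = |\{y \in A : d(m^*,y) = \rho\}|$.

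Third, I would use the characterisation $d(B_n,\sigma B_n) = \rho \Longleftrightarrow \sigma B_n \in \Omega_\rho \Longleftrightarrow \sigma \in B_n x_\rho B_n$ recorded earlier in the paper. This gives
\[
    a_\rho(A) = |\{\sigma B_n : \sigma \in S_{2\lambda},\; \sigma \in B_n x_\rho B_n\}|.
\]
Finally, since the map $\sigma \mapsto \sigma B_n$ from $S_{2\lambda}$ onto $A$ has fibres of constant size $|S_{2\lambda} \cap B_n| = |B_\lambda| = 2^n \lambda!$, dividing the count of elements by this fibre size yields
\[
    a_\rho(A) = \frac{|S_{2\lambda} \cap B_n x_\rho B_n|}{2^n \lambda!},
\]
as claimed. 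There is no real obstacle here; the only point that deserves care is the bookkeeping between elements of $S_{2\lambda}$ and cosets in $A$, which is handled by noting that $S_{2\lambda} \cap B_n = B_\lambda$.
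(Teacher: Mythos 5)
Your proof is correct, and it takes a slightly different route from the paper's. The paper computes the inner distribution by brute-force double counting: it counts all pairs $(\sigma,\tau)\in (S_{2\lambda}B_n)^2$ with $\sigma^{-1}\tau$ of coset type $\rho$, using a decomposition of $S_{2\lambda}B_n$ into right cosets $S_{2\lambda}r$ with $r$ ranging over a transversal $R\subseteq B_n$, and then divides by $|B_n|^2$ and $|A|$; the bookkeeping with $|R|=|A||B_n|/|S_{2\lambda}|$ does the work that your symmetry argument does. You instead observe that $S_{2\lambda}$ acts on $A$ transitively and by automorphisms of the scheme (left multiplication preserves each relation, since $d$ depends only on $\sigma^{-1}\tau$), so the quantity $|\{y\in A: d(x,y)=\rho\}|$ is independent of $x\in A$ and the double sum collapses to a count from the base point $m^*=B_n\in A$; the remaining count of cosets $\sigma B_n$ with $\sigma\in S_{2\lambda}\cap B_n x_\rho B_n$ is handled by noting that the fibres of $\sigma\mapsto \sigma B_n$ on $S_{2\lambda}$ are the cosets $\sigma(S_{2\lambda}\cap B_n)=\sigma B_\lambda$ of size $2^n\lambda!$, and that membership of $\sigma$ in $B_n x_\rho B_n$ is constant on each such fibre. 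Both arguments rest on the same two facts (the identification $A=\{\sigma B_n:\sigma\in S_{2\lambda}\}$ and the coset-type characterisation of $d$), but your reduction via homogeneity is a bit shorter and makes the structural reason for the formula more transparent, whereas the paper's version is entirely self-contained arithmetic and does not need to invoke the automorphism action. The only points requiring care in your write-up—that $m^*\in A$, that the fibre size is exactly $|B_\lambda|=2^n\lambda!$, and that the double-coset condition is well defined on left cosets—are all correctly addressed.
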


\begin{proof}
    First, recall that $d(\sigma B_n,\tau B_n) = \rho$ if and only if the coset type of $\sigma^{-1} \tau$ is $\rho$. Since every matching in $A$ corresponds to a left coset $\sigma B_n$ and all elements of $\sigma B_n$ have the same coset type, we get the number of pairs $(\sigma B_n,\tau B_n)$ with $d(\sigma B_n,\tau B_n) = \rho$ by counting the number of pairs $(\sigma,\tau)\in (S_{2\lambda}B_n)^2$ such that $\sigma^{-1}\tau$ is of coset type $\rho$ and dividing the result by $|B_n|^2$. Write
    \[
        S_{2\lambda}B_n = \bigcup_{r \in R} S_{2\lambda}r
    \]
    where $R \subseteq B_n$ is a complete set of representatives of the decomposition of $S_{2\lambda}B_n$ into right cosets of $S_{2\lambda}$.  Then every element of $z \in A$ can be uniquely written as $z=\sigma r$ with $\sigma \in S_{2\lambda}$ and $r \in R$.
    Note that $|R|=|B_n:S_{2\lambda}\cap B_n|=
    |B_n:B_\lambda|=|A||B_n|/|S_{2\lambda}|$.
    Counting the pairs in the symmetric group gives
    \begin{align*}
       &\;|\{(\sigma,\tau) \in (S_{2\lambda}B_n)^2 :\text{coset type}(\sigma^{-1}\tau) = \rho \}|\\
       =&\; |\{(\sigma,r,\tau,s) \in (S_{2\lambda}\times R)^2 : (\sigma r)^{-1} \tau s \in B_n x_\rho B_n \}|\\
       =&\; |\{(\sigma,r,\tau,s) \in (S_{2\lambda}\times R)^2 : r^{-1}\sigma^{-1} \tau s \in B_n x_\rho B_n \}|\\      
       =&\; |R|^2|\,\{(\sigma,\tau) \in S_{2\lambda}^2 : \sigma^{-1} \tau \in B_n x_\rho B_n \}|\\
       =&\; |S_{2\lambda}|\,|R|^2|\,\{\sigma \in S_{2\lambda} : \sigma \in B_n x_\rho B_n \}|\\
       =&\;|S_{2\lambda}||R|^2 |S_{2\lambda} \cap B_n x_\rho B_n|\\
       =&\;\frac{|A|^2|B_n|^2}{|S_{2\lambda}|} |S_{2\lambda} \cap B_n x_\rho B_n|\\
       =&\;\frac{|A||B_n|^2}{|B_\lambda|} |S_{2\lambda} \cap B_n x_\rho B_n|.
    \end{align*}
    Dividing by $|B_n|^2$ gives the number of pairs in the scheme $S_{2n}/B_n$ and dividing by $|A|$ gives the inner distribution. 
\end{proof}

Now let $\theta^\lambda = e_{B_n} \ast {1}_{2\lambda} \ast e_{B_n} \in \mathbb{C}[B_n \backslash S_{2n} / B_n]$ where $1_{2\lambda}$ is the trivial character of $S_{2\lambda}$. In $\mathbb{C}[S_{2n}]$, $1_{2\lambda}$ is the characteristic vector of the subgroup $S_{2\lambda}$. Explicitly, we have
\[
    \theta^\lambda(\sigma) = (e_{B_n} \ast {1}_{2\lambda} \ast e_{B_n})(\sigma) = \frac{1}{|B_n|^2}\sum_{a,b \in B_n} 1_{2\lambda}(a\sigma b).
\]
 We can decompose $\theta^\lambda$ in terms of the orthogonal basis of $\mathbb{C}[B_n \backslash S_{2n} / B_n]$ given by the zonal spherical functions $\omega^\mu$. Let $x_\rho \in S_{2n}$ be any element of coset type $\rho$, recall that $\omega^\mu$ and $\theta^\lambda$ are constant on the double cosets and calculate
\begin{align*}
    \langle \omega^\mu,\theta^\lambda \rangle &= \sum_{\sigma \in S_{2n}} \omega^\mu(\sigma)\overline{\theta^\lambda(\sigma)} = \sum_{\rho\,\vdash\,n} |B_n x_\rho B_n| \omega^{\mu}_\rho \theta^\lambda(x_\rho)\\
    &= \frac{1}{|B_n|^2} \sum_{\rho \,\vdash\,n} |B_n x_\rho B_n|\,\omega^\mu_\rho \sum_{a,b \in B_n} 1_{2\lambda}(a x_\rho b).
\end{align*}
So it remains to calculate the inner sum. We find
\[
    \sum_{a,b \in B_n} 1_{2\lambda}(a x_\rho b)  = \sum_{a \in B_n} \sum_{b \in B_n} 1_{2\lambda}(ax_\rho b) = \sum_{a \in B_n} |S_{2\lambda} \cap ax_\rho B_n|.
\]
We have $ax_\rho B_n = bx_\rho B_n \,\Longleftrightarrow\, a^{-1}b \in x_\rho B_n x_\rho^{-1}$. Since $a,b \in B_n$, we find that $ax_\rho$ and $bx_\rho$ are representatives of the same coset if and only if $a^{-1} b \in B_n \cap x_\rho B_n x_\rho^{-1}$, equivalently $ a(B_n \cap x_\rho B_n x_\rho^{-1})= b (B_n \cap x_\rho B_n x_\rho^{-1})$. Let $R \subseteq B_n$ be a system of representatives of the left cosets of $B_n \,/\,(B_n \cap x_\rho B_n x_\rho^{-1})$. Then we find
\begin{align*}
    \sum_{a \in B_n} |S_{2\lambda} \cap ax_\rho B_n| &= \sum_{r \in R} \sum_{\sigma \in B_n \cap x_\rho B_n x_\rho^{-1}} |S_{2\lambda} \cap r\sigma x_\rho B_n| \\
    &= \sum_{r \in R} \sum_{\sigma \in B_n \cap x_\rho B_n x_\rho^{-1}} |S_{2\lambda} \cap rx_\rho B_n|\\
    &= \sum_{r \in R} |B_n \cap x_\rho B_n x_\rho^{-1}|\,|S_{2\lambda} \cap rx_\rho B_n|\\
    &= |B_n \cap x_\rho B_n x_\rho^{-1}|\sum_{r \in R} |S_{2\lambda} \cap rx_\rho B_n|\\
    &= |B_n \cap x_\rho B_n x_\rho^{-1}|\,|S_{2\lambda} \cap B_n x_\rho B_n|.
\end{align*}
We have that (cf. \cite[Ch. VII (2.3)]{Mac1995})
\begin{align*}
    |B_n x_\rho B_n| = \frac{|B_n|^2}{|B_n \cap x_\rho B_n x_\rho^{-1}|}.
\end{align*}
Putting it all together, we obtain
\begin{align*}
    \langle \omega^\mu,\theta^\lambda \rangle &= \frac{1}{|B_n|^2} \sum_{\rho \,\vdash\,n} |B_n x_\rho B_n|\,\omega^\mu_\rho \sum_{a \in B_n} |S_{2\lambda} \cap ax_\rho B_n|\\
    &= \frac{1}{|B_n|^2} \sum_{\rho \,\vdash\,n} |B_n x_\rho B_n|\,\omega^\mu_\rho \, |B_n \cap x_\rho B_n x_\rho^{-1}|\,|S_{2\lambda} \cap B_n x_\rho B_n|\\
    &= \sum_{\rho \,\vdash\,n} \omega^\mu_\rho \,|S_{2\lambda}\cap B_n x_\rho B_n| = 2^n \lambda! \sum_{\rho \,\vdash\,n} \omega^\mu_\rho a_{\rho}(A) \quad (\text{Lemma \ref{lem:inner}}) \\
    & = \frac{2^n \lambda!}{\chi^{2\mu}(1)}\,\chi^{2\mu}(1) \sum_{\rho \,\vdash\,n} \omega^\mu_\rho a_{\rho}(A) = \frac{2^n \lambda!}{\chi^{2\mu}(1)} \, a'_\mu(A).
\end{align*}
Hence, $\langle \omega^\mu,\theta^\lambda \rangle$ gives the dual distribution of $A$ up to a non-zero constant.

Indeed, $\langle \omega^\mu,\theta^\lambda \rangle$ is a positive multiple of the coefficient $u_{\mu,\lambda}$ of the monomial symmetric function $m_\lambda$ in the expansion of the zonal polynomial $Z_\mu$ (see the proof of \cite[Ch. VII (2.22)]{Mac1995}). This coefficient is 0 unless $\mu \rtriangle \lambda$. Now by \cite[Ch. VII (2.23)]{Mac1995}, $Z_\mu$ is the \emph{Jack symmetric function} with parameter $\alpha = 2$. By \cite[Ch. VI (10.13),(10.15)]{Mac1995}, $u_{\mu,\lambda}>0$ whenever $\mu \rtriangle \lambda$. It follows that
\[
    \langle \omega^\mu,\theta^\lambda \rangle \neq 0 \;\Longleftrightarrow \; \mu \rtriangle \lambda.
\]

We can now prove Theorem \ref{theorem:main1}.

\begin{proof}[Proof of Theorem \ref{theorem:main1}]
    A $\lambda$-factorisation $D$ is a $T$-design where $T$ is the set of partitions such that the set $A$ of left cosets of $B_n$ with representatives in $S_{2\lambda}B_n$ is a $T$-antidesign. By the calculation above, the dual distribution of $A$ is
    \(
        a'_\mu (A) = C\langle \omega^\mu,\theta^\lambda \rangle
    \)
    for some non-zero constant~$C$. Since $\langle \omega^\mu,\theta^\lambda \rangle \neq 0$ if and only if $\mu \rtriangle \lambda$, the result follows.
\end{proof}

By characterising $\lambda$-factorisations in terms of the dual distribution, we can compare $\lambda$-factorisations of different types.

\begin{Theorem}\label{theorem:comparison of strengths}
    Let $\lambda,\mu \,\vdash n$ and $\varnothing \neq D \subseteq S_{2n}/B_n$ be a $\lambda$-factorisation. If $\mu \rtriangle \lambda$, then $D$ is also a $\mu$-factorisation.
\end{Theorem}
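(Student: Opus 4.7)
The plan is to deduce this directly from Theorem \ref{theorem:main1} by chaining two applications through the transitivity of the dominance order. First I would apply Theorem \ref{theorem:main1} to the hypothesis that the non-empty set $D$ is a $\lambda$-factorisation, which produces the dual-degree condition that $a'_\nu(D) = 0$ for every $\nu \vdash n$ with $\lambda \ltriangle \nu \neq (n)$.

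To conclude that $D$ is a $\mu$-factorisation, by the ``if'' direction of the same theorem it is enough to verify the corresponding condition for $\mu$: namely, that $a'_\nu(D) = 0$ whenever $\mu \ltriangle \nu \neq (n)$. Fix any such $\nu$. The hypothesis $\mu \rtriangle \lambda$ means exactly $\lambda \ltriangle \mu$, and since $\ltriangle$ is a partial order (transitivity being immediate from its partial-sum definition), we obtain $\lambda \ltriangle \mu \ltriangle \nu$, hence $\lambda \ltriangle \nu$. Combined with $\nu \neq (n)$, the first condition forces $a'_\nu(D) = 0$. Applying Theorem \ref{theorem:main1} in the other direction then yields that $D$ is a $\mu$-factorisation.

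The main obstacle is essentially nil: the proof is a one-line consequence of Theorem \ref{theorem:main1} once one observes the inclusion of ``forbidden'' dual-degree sets
\[
\{\nu \vdash n : \mu \ltriangle \nu,\; \nu \neq (n)\} \;\subseteq\; \{\nu \vdash n : \lambda \ltriangle \nu,\; \nu \neq (n)\}
\]
whenever $\lambda \ltriangle \mu$. The content of the statement is conceptual rather than technical: it is the dominance-lattice monotonicity of $\lambda$-factorisations, which falls out of the dual-distribution characterisation for free, and generalises the observation from the introduction that an $s-(2,n)$ partition system is automatically an $(s-1)-(2,n)$ partition system.
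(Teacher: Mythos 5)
Your proposal is correct and is exactly the paper's argument: apply Theorem \ref{theorem:main1} in both directions and use the transitivity of the dominance order to see that the set of partitions forced to vanish for $\mu$ is contained in that for $\lambda$. The paper states this in one line; your write-up simply makes the same reasoning explicit.
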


\begin{proof}
    Follows immediately from Theorem \ref{theorem:main1} and the fact that the dominance order $\ltriangle$ is transitive.
\end{proof}

\begin{Remark}
    Notice that the converse of Theorem \ref{theorem:comparison of strengths} does not necessarily
    hold if $\lambda$ is not `small' enough. Consider $n=6$ and suppose $D$ is a $(3,2,1)$-factorisation. Then $D$ is also a $\mu$-factorisation for $\mu\in\{(3,3),(4,1,1),(4,2),(5,1)\}$. Now $D$ is a $\lambda$-factorisation where $\lambda=(4,1,1)$, but we also have that $D$ is a $\mu$-factorisation where $\mu=(3,3)$; but $\lambda$ and $\mu$
    are incomparable in the dominance order.
\end{Remark}

Theorem \ref{theorem:comparison of strengths}
gives divisibility constraints on the indices reminiscent of the arithmetic conditions on the parameters of block designs.

\begin{Theorem}\label{thm:comparison_general}
    Let $\lambda,\mu \,\vdash n$ with $\mu \rtriangle \lambda$ and $\varnothing \neq D \subseteq S_{2n}/B_n$ be a $\lambda$-factorisation of index $c_\lambda$. Then $D$ is a $\mu$-factorisation of index $c_\mu$ where
    \[
        c_\mu = c_\lambda \,\frac{(2\mu_1-1)!!(2\mu_2 -1)!!\dots}{(2\lambda_1-1)!!(2\lambda_2 -1)!!\dots}.
    \]
    Hence $(2\lambda_1 -1)!!(2\lambda_2-1)!!\dots$ divides $c_\lambda (2\mu_1-1)!!(2\mu_2 -1)!!\dots$ for all $\mu \rtriangle \lambda$.
\end{Theorem}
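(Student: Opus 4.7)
The plan is to use a simple two-way count that bypasses the zonal spherical function machinery altogether; Theorem~\ref{theorem:comparison of strengths} already gives that $D$ is a $\mu$-factorisation, so only the value of the index $c_\mu$ remains to be determined.

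For each partition $\nu\vdash n$ we write $N_\nu$ for the total number of set partitions of $\{1,\ldots,2n\}$ of shape $2\nu$, and $M_\nu$ for the number of such set partitions that are refined by one fixed perfect matching; the latter is independent of the matching since $S_{2n}$ acts transitively on perfect matchings. We then count the pairs $(P,m)$ with $P$ a set partition of shape $2\nu$ and $m\in D$ refining $P$ in two ways, obtaining $N_\nu c_\nu = |D|\,M_\nu$. Applying this identity for both $\nu=\lambda$ and $\nu=\mu$ and eliminating $|D|$ yields
\[
    \frac{c_\mu}{c_\lambda} \;=\; \frac{M_\mu\,N_\lambda}{M_\lambda\,N_\mu}.
\]

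The rest is an arithmetic simplification. Standard multinomial arguments give $N_\nu=(2n)!/\bigl(\prod_j (2\nu_j)!\,\prod_k m_k(\nu)!\bigr)$ and $M_\nu=n!/\bigl(\prod_j \nu_j!\,\prod_k m_k(\nu)!\bigr)$, where $m_k(\nu)$ denotes the multiplicity of $k$ in $\nu$; the formula for $M_\nu$ comes from counting unordered groupings of the $n$ edges of the fixed matching into blocks of sizes $\nu_1,\nu_2,\ldots$. The repeated-part factors cancel in the ratio $N_\nu/M_\nu$, and the identity $(2k)!=2^k k!\,(2k-1)!!$ then collapses it to
\[
    \frac{N_\nu}{M_\nu}\;=\;\frac{(2n-1)!!}{\prod_j (2\nu_j-1)!!}.
\]
Substituting both specialisations gives the claimed closed form for $c_\mu$, and the divisibility statement is immediate from the integrality of $c_\mu$. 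There is no real obstacle here; the only thing worth double-checking is that the multiplicity-of-repeated-parts factors genuinely cancel, which they do because $2\nu$ has the same multiplicity profile as $\nu$ (each part merely doubled).
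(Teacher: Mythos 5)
Your proposal is correct and follows essentially the same route as the paper: both invoke Theorem~\ref{theorem:comparison of strengths} to know $D$ is a $\mu$-factorisation, then double count pairs $(m,P)$ with $P$ of shape $2\nu$ for $\nu=\lambda$ and $\nu=\mu$, and eliminate $|D|$ (the paper phrases this as computing $|D|=c_\nu(2n-1)!!/\prod_j(2\nu_j-1)!!$ for each $\nu$ and equating). Your formulas for $N_\nu$, $M_\nu$ and the simplification via $(2k)!=2^k k!\,(2k-1)!!$ match the paper's computation, including the cancellation of the repeated-part multiplicities.
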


\begin{proof}
    We count the pairs $(m,P)$ of matchings $m \in D$ that refine a set partition $P$ of shape $2\lambda$ in two ways. First, we count the number of set partitions of $\{1,\ldots,2n\}$ of shape $2\lambda$, and then we count the number of set partitions that one matching $m$ refines.\par
    Denote by $m_i$ the number of parts of $\lambda$ of size $i$. Then the number of set partitions of shape $2\lambda$ is
    \[
        \frac{(2n)!}{\prod\limits_{i \ge 1,m_i>0} (2i)!^{m_i} m_i!}
    \]
    because the parts of a set partition of shape $2\lambda$ have even size and the parts of size $2i$ are stabilised by the wreath product $S_{2i} \wr S_{m_i}$.
    
    Now consider any perfect matching $m$. Counting the number of set partitions that $m$ refines comes down to counting in how many ways the $n$ edges of $m$ can be grouped into parts according to $\lambda$. We find that there are
    \[
        \frac{n!}{\prod\limits_{i \ge 1,m_i>0} i!^{m_i} m_i!}
    \]
    set partitions of shape $2\lambda$ that $m$ refines. Hence, double counting the pairs $(m,P)$ of matchings $m \in D$ that refine a set partition $P$ of shape $2\lambda$, we obtain
    \[
        c_\lambda \,\frac{(2n)!}{\prod\limits_{i \ge 1,m_i>0} (2i)!^{m_i} m_i!} = |D| \,\frac{n!}{\prod\limits_{i \ge 1,m_i>0} i!^{m_i} m_i!} \; \Longleftrightarrow \; |D| = c_\lambda \frac{(2n)!}{n!} \prod\limits_{i \ge 1,m_i>0} \left(\frac{i!}{(2i)!}\right)^{m_i}.
    \]
    Now $(2n)!/n! = 2^n(2n-1)!!$, the powers of $2$ cancel and we can rewrite the product as
    \[
        |D| = c_\lambda (2n-1)!! \prod\limits_{i = 1}^{l(\lambda)} \frac{1}{(2\lambda_i-1)!!}.
    \]
    By Theorem \ref{theorem:comparison of strengths}, we find that $D$ is also a $\mu$-factorisation, so we can also compute the size of $D$ in terms of $\mu$. Equating both counts gives
    \[
        c_\lambda (2n-1)!! \prod\limits_{i = 1}^{l(\lambda)} \frac{1}{(2\lambda_i-1)!!} = c_\mu (2n-1)!! \prod\limits_{i = 1}^{l(\mu)} \frac{1}{(2\mu_i-1)!!} \; \Longleftrightarrow \; c_\mu = c_\lambda \frac{(2 \mu_1 -1)!!(2\mu_2 -1)!!\ldots}{(2 \lambda_1 -1)!!(2\lambda_2 -1)!!\ldots}.
    \]
    Since $c_\mu$ is an integer, the result follows.
\end{proof}

The formula in Theorem \ref{thm:comparison_general} also reveals a simple way to obtain the size of
a $\lambda$-factorisation of index $c_\lambda$ (i.e., use $\mu=(n)$):
\[
  c_\lambda\, \frac{(2n-1)!!}{(2\lambda_1-1)!!(2\lambda_2 -1)!!\dots}.
\]

The following is a simple yet powerful special case of Theorem \ref{thm:comparison_general}.
\begin{Corollary}\label{cor:k_and_l}
    Let $\lambda \vdash n$, $\lambda\neq (n)$, and let $D \neq \varnothing$ be a $\lambda$-factorisation of index $c_\lambda$. If $k,l$ with $k \le l$ are distinct parts of $\lambda$ (which may be the same if $\lambda$ has multiple parts of the same size), then $2k-1$ divides $(2l+1)c_\lambda$. In particular, if $D$ is of index 1, then $2k-1$ divides $2l+1$.
\end{Corollary}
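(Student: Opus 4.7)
The plan is to apply Theorem \ref{thm:comparison_general} to a judiciously chosen $\mu \rtriangle \lambda$ that differs from $\lambda$ at exactly the two positions corresponding to the chosen parts $k$ and $l$, and to read off the divisibility from the index ratio.

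First I would construct $\mu$ by taking a part of $\lambda$ equal to $l$ and replacing it with $l+1$, and taking a distinct part equal to $k$ and replacing it with $k-1$ (dropping this part if $k=1$). Since $l \ge k$, the row of length $l$ lies weakly above the row of length $k$ in the Young diagram of $\lambda$, so this transformation amounts to moving a single box from the end of a lower row to the end of a higher row. By the characterisation of the dominance order recalled in the preliminaries, this yields $\mu \rtriangle \lambda$, and clearly $\mu \neq \lambda$. The hypothesis $\lambda \neq (n)$ guarantees that two such parts can actually be chosen, and the assumption "$k,l$ distinct parts (possibly of the same value)" covers the case $k=l$ whenever $\lambda$ has at least two parts of size $k$.

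Next I would invoke Theorem \ref{thm:comparison_general} to conclude that $D$ is also a $\mu$-factorisation of index
\[
    c_\mu = c_\lambda \,\frac{(2\mu_1-1)!!(2\mu_2-1)!!\cdots}{(2\lambda_1-1)!!(2\lambda_2-1)!!\cdots}.
\]
The key observation is that the factors coming from the parts common to $\lambda$ and $\mu$ cancel, so the ratio collapses to
\[
    \frac{(2(l+1)-1)!!\,(2(k-1)-1)!!}{(2l-1)!!\,(2k-1)!!} \;=\; \frac{(2l+1)!!\,(2k-3)!!}{(2l-1)!!\,(2k-1)!!} \;=\; \frac{2l+1}{2k-1},
\]
using the convention $(-1)!! = 1$ for the boundary case $k=1$ (where the divisibility is trivial anyway). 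Since $c_\mu$ is a positive integer, this forces $2k-1 \mid (2l+1)\,c_\lambda$, and the ``index 1'' assertion follows immediately.

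I do not anticipate a substantial obstacle: the argument is purely a matter of exhibiting the right one-step refinement in the dominance lattice and reading off the cancellation. The only mild bookkeeping is checking that the construction of $\mu$ is legitimate in the edge cases ($k=1$, or $k=l$ with multiplicity), both of which are handled above.
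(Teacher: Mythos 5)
Your argument is correct and is essentially the paper's own proof: write $\lambda=\widetilde{\lambda}\cup(l,k)$, pass to $\mu=\widetilde{\lambda}\cup(l+1,k-1)$, note $\lambda\ltriangle\mu$, and apply Theorem \ref{thm:comparison_general} to get $c_\mu=c_\lambda\frac{2l+1}{2k-1}$, whence the divisibility. Your extra care with the dominance check and the $k=1$ boundary case (via $(-1)!!=1$, where the part $k-1=0$ simply disappears) is consistent with, and slightly more explicit than, the paper's one-line treatment.
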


\begin{proof}
   Write $\lambda = \widetilde{\lambda} \cup (l,k) $ and let $\mu = \widetilde{\lambda} \cup (l+1,k-1)$. Then we have $\lambda \ltriangle \mu$.  Now use Theorem \ref{thm:comparison_general} to find that
   \[
        c_\mu = c_\lambda \,\frac{(2(k-1)-1)!!(2(l+1)-1)!!}{(2k-1)!!(2l-1)!!} = c_\lambda \frac{2l+1}{2k-1}.\qedhere
   \]
\end{proof}

This severely restricts the possible partitions $\lambda$ such that a $\lambda$-factorisation of index 1 exists. Applying Theorem \ref{thm:comparison_general} to partitions with two parts, we get the following:

\begin{Corollary}\label{cor:comparison_two_parts}
    Let $t \le n/2$ and $\varnothing \neq D \subseteq S_{2n}/B_n$ be an $(n-t,t)$-factorisation of index $c_t$. Then $D$ is an $(n-t+1,t-1)$-factorisation of index $c_{t-1}$ where
    \[
        c_{t-1} = c_t\,\frac{2n-2t+1}{2t-1} = c_t\,\frac{2n}{2t-1} -c_t.
    \]
\end{Corollary}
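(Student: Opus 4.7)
The plan is to specialise Theorem~\ref{thm:comparison_general} to the two-part partitions $\lambda = (n-t,t)$ and $\mu = (n-t+1,t-1)$, so the argument reduces to a routine calculation with double factorials.

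First I would verify the dominance relation $(n-t,t) \ltriangle (n-t+1,t-1)$. This is immediate: the first partial sum increases from $n-t$ to $n-t+1$, while the full sum equals $n$ in both cases. Equivalently, $\mu$ is obtained from $\lambda$ by moving a single box from the second row to the first, which is exactly the elementary move characterising the dominance order recalled in the preliminaries. Note that $t \ge 1$ is implicit; the edge case $t=1$ gives $\mu=(n)$, and any non-empty set of perfect matchings is trivially an $(n)$-factorisation, so nothing is lost.

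Next I would apply Theorem~\ref{thm:comparison_general} with these $\lambda$ and $\mu$, which immediately gives that $D$ is an $(n-t+1,t-1)$-factorisation of index
\[
c_{t-1} \;=\; c_t \cdot \frac{(2(n-t+1)-1)!!\,(2(t-1)-1)!!}{(2(n-t)-1)!!\,(2t-1)!!}
\;=\; c_t \cdot \frac{(2n-2t+1)!!\,(2t-3)!!}{(2n-2t-1)!!\,(2t-1)!!}.
\]
Using the one-step recursions $(2n-2t+1)!! = (2n-2t+1)\,(2n-2t-1)!!$ and $(2t-1)!! = (2t-1)\,(2t-3)!!$, the longer double factorials cancel and the ratio collapses to $(2n-2t+1)/(2t-1)$, which is the first claimed expression for $c_{t-1}$. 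The second expression is the trivial rewriting
\[
\frac{2n-2t+1}{2t-1} \;=\; \frac{2n-(2t-1)}{2t-1} \;=\; \frac{2n}{2t-1}-1.
\]

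There is no real obstacle: the corollary is a clean specialisation and the only care needed is tracking the factorials correctly. As a sanity check, the divisibility statement $(2t-1)\mid(2n-2t+1)c_t$ implicit in $c_{t-1}\in\mathbb{Z}$ also follows directly from Corollary~\ref{cor:k_and_l} applied with $k=t$ and $l=n-t$, which confirms the arithmetic.
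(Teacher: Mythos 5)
Your proof is correct and is essentially the paper's own argument: apply Theorem~\ref{thm:comparison_general} with $\lambda=(n-t,t)$ and $\mu=(n-t+1,t-1)$ (noting $\mu\rtriangle\lambda$) and simplify the double-factorial ratio. You have merely written out the routine cancellation that the paper leaves implicit.
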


\begin{proof}
    Use Theorem \ref{thm:comparison_general} with $\lambda = (n-t,t)$ and $\mu = (n-t+1,t-1)$, which satisfy $\mu \rtriangle \lambda$.
\end{proof}

    Notice that $c_{t-1}$ is an integer if and only if $2t-1 \,|\, c_t\cdot2n$. This gives the following non-existence result.

\begin{Corollary}\label{cor:comparison_t_factorisations}
    For every $n\ge 4$, an $(n-2,2)$-factorisation of index coprime to 3 (in particular index 1) can only exist if $n \equiv 0 \pmod{3}$.
\end{Corollary}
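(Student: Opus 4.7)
The plan is to apply Corollary \ref{cor:comparison_two_parts} directly with $t = 2$. If $D$ is an $(n-2,2)$-factorisation of index $c_2$, then Corollary \ref{cor:comparison_two_parts} tells us that $D$ is also an $(n-1,1)$-factorisation of index
\[
    c_1 = c_2 \cdot \frac{2n - 3}{3}.
\]
Since an $(n-1,1)$-factorisation of index $c_1$ must have $c_1 \in \mathbb{N}$, we conclude that $3 \mid c_2 (2n-3)$.

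Next I would use the hypothesis $\gcd(c_2, 3) = 1$ to move the factor of $3$ onto $2n - 3$. This forces $3 \mid (2n-3)$, and since $3 \mid 3$, we get $3 \mid 2n$. Because $\gcd(2,3)=1$, this is equivalent to $n \equiv 0 \pmod{3}$. The case of index $1$ is the special case $c_2 = 1$, which is trivially coprime to $3$.

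There is no real obstacle here — the corollary is a one-line consequence of the integrality of $c_1$ coming out of the formula in Corollary \ref{cor:comparison_two_parts}. The only thing worth noting in the write-up is the implicit use of the fact that $(n-1,1) \rtriangle (n-2,2)$ in the dominance order, which is needed so that Corollary \ref{cor:comparison_two_parts} applies, and this is immediate from the definition.
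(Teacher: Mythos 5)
Your proposal is correct and matches the paper's proof exactly: the paper also deduces the result by applying Corollary \ref{cor:comparison_two_parts} with $t=2$ and using the integrality of $c_1=c_2\frac{2n-3}{3}$ (noting, as in the remark preceding the corollary, that this forces $3 \mid c_2\cdot 2n$, hence $3\mid n$ when $\gcd(c_2,3)=1$). Your additional check that $(n-2,2)\ltriangle(n-1,1)$ is a fine, if implicit, detail.
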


\begin{proof}
    Use Corollary \ref{cor:comparison_two_parts} for $t=2$.
\end{proof}

Note that Corollary \ref{cor:comparison_two_parts} can be applied repeatedly, i.e., an $(n-t,t)$-factorisation is an $(n-s,s)$-factorisation for every $s= t-1,t-2, \ldots,1$. For example, an $(n-3,3)$-factorisation of index 1 (where $n \ge 6$) is an $(n-2,2)$-factorisation of index $\frac{2n-5}{5}$, so $n \equiv 0 \pmod 5$. Now an $(n-2,2)$-factorisation of index $\frac{2n-5}{5}$ is an $(n-1,1)$-factorisation of index $\frac{2n-5}{5}\cdot\frac{2n-3}{3}$, so $(2n-5)(2n-3) \equiv 0 \pmod 3$, hence $n \equiv 0,1 \pmod 3$. So an $(n-3,3)$-factorisation of index 1 can only exist if $n \equiv 0,10 \pmod{15}$. In general, we get the following divisibility criteria.

\begin{Theorem}\label{theorem:twoparts}
    Let $t \le n/2$. If an $(n-t,t)$-factorisation of index 1 exists, then
    \[
        (2t-1)(2t-3)\ldots (2k+1) \text{ divides } (2n-(2t-1))(2n-(2t-3))\ldots(2n-(2k+1))
    \]
    for every $k=t-1,t-2,\ldots,1$. Equivalently,
    \[
        \frac{(2(n-k)-1)!!(2k-1)!!}{(2(n-t)-1)!!(2t-1)!!}
    \]
    is an integer for every $k=t-1,t-2,\ldots,1$.
\end{Theorem}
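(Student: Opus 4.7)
The plan is to iterate Corollary \ref{cor:comparison_two_parts} descending from $t$ down to $k$, producing a telescoping formula for the index $c_k$ of the resulting $(n-k,k)$-factorisation, and then read off the divisibility condition.

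\medskip

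\textbf{Step 1.} Let $D$ be an $(n-t,t)$-factorisation of index $c_t = 1$. For each integer $s$ with $1 \le s \le t$, note that $(n-s+1,s-1)$ dominates $(n-s,s)$ whenever $s \le n/2$, so Corollary \ref{cor:comparison_two_parts} applies and yields that $D$ is an $(n-s+1,s-1)$-factorisation of index
\[
    c_{s-1} = c_s \cdot \frac{2n-2s+1}{2s-1}.
\]

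\textbf{Step 2.} Proceed by (downwards) induction on $s=t,t-1,\ldots,k+1$, starting from $c_t=1$. The recursion telescopes to
\[
    c_k \;=\; \prod_{s=k+1}^{t} \frac{2n-2s+1}{2s-1} \;=\; \frac{(2n-2k-1)(2n-2k-3)\cdots(2n-2t+1)}{(2t-1)(2t-3)\cdots(2k+1)}.
\]
In double-factorial notation, the numerator equals $(2(n-k)-1)!!/(2(n-t)-1)!!$ and the denominator equals $(2t-1)!!/(2k-1)!!$, so
\[
    c_k \;=\; \frac{(2(n-k)-1)!!\,(2k-1)!!}{(2(n-t)-1)!!\,(2t-1)!!},
\]
which matches the expression stated in the theorem.

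\medskip

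\textbf{Step 3.} Since $c_k$ is the index of a $\lambda$-factorisation with $\lambda=(n-k,k)$, it is a positive integer. Hence the denominator $(2t-1)(2t-3)\cdots(2k+1)$ divides the numerator $(2n-2t+1)(2n-2t+3)\cdots(2n-2k-1)$ for every $k = t-1, t-2, \ldots, 1$, which is precisely the first formulation of the theorem; the double-factorial reformulation is immediate from Step 2.

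\medskip

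There is no real obstacle here; the work is in checking that the hypotheses of Corollary \ref{cor:comparison_two_parts} are legitimately satisfied at each iterative step (in particular that $s \le n/2$ throughout, which is guaranteed since $s$ decreases from $t \le n/2$), and in carefully telescoping the product so that the final expression is written in the double-factorial form stated.
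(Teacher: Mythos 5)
Your proof is correct and follows the same route as the paper: the paper's proof is exactly to apply Theorem \ref{thm:comparison_general} (or Corollary \ref{cor:comparison_two_parts}) along the dominance chain $(n-t,t)\ltriangle(n-t+1,t-1)\ltriangle\cdots\ltriangle(n-1,1)$, and your telescoped formula for $c_k$ and the integrality conclusion are just the details of that same argument written out.
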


\begin{proof}
    Apply Theorem \ref{thm:comparison_general} or Corollary \ref{cor:comparison_two_parts} to the chain $(n-t,t) \ltriangle (n-t+1,t-1) \ltriangle \ldots \ltriangle (n-2,2) \ltriangle (n-1,1)$.
\end{proof}

Note that Theorem \ref{theorem:twoparts} also holds in the case $k=0$ but the resulting divisibility condition is a weaker version of the divisibility condition for $k=1$.

For many combinatorial objects there exists a process called \emph{derivation} that builds smaller objects from bigger ones. This is also the case for $\lambda$-factorisations.

\begin{Definition}
    Let $\lambda \vdash n$, $\lambda \neq (n)$, and $D$ be a $\lambda$-factorisation. Let $k$ be a part of $\lambda$ and $S \subseteq \{1,\ldots,2n\}$ be a set of size $2k$. For a perfect matching $m \in D$, we write $m \setminus S$ for the set of edges of $m$ that are disjoint from $S$. Then the set
    \[
        D_S = \left\lbrace m \setminus S \mid m \text{ refines } \{S,[2n] \setminus S\right\rbrace \}
    \]
    is called the \emph{derivation} of $D$ at $S$.
\end{Definition}

The following theorem is immediate.

\begin{Theorem}\label{theorem:derivation}
    Let $\lambda \vdash n$, $\lambda \neq (n)$, and $D$ be a $\lambda$-factorisation of index $c$. Let $k$ be a part of $\lambda$, $S \subseteq \{1,\ldots,2n\}$ be a set of size $2k$ and write $\lambda = \widetilde{\lambda} \cup (k)$ for a partition $\widetilde{\lambda} \vdash n-k$. Then the derivation $D_S$ is a $\widetilde{\lambda}$-factorisation of index $c$.
\end{Theorem}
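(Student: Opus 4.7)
The plan is to prove the derivation theorem by a direct bijective counting argument that reduces a $\widetilde{\lambda}$-refinement in $D_S$ to a $\lambda$-refinement in $D$. In essence, the statement is a matter of unpacking the two definitions and observing that attaching the block $S$ to an arbitrary shape-$2\widetilde{\lambda}$ set partition of the complement produces exactly the set partitions of shape $2\lambda$ that contain $S$.

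More precisely, I first fix an arbitrary set partition $\widetilde{P}$ of $\{1,\ldots,2n\}\setminus S$ of shape $2\widetilde{\lambda}$. I form the set partition $P = \widetilde{P} \cup \{S\}$ of $\{1,\ldots,2n\}$, which has shape $2\widetilde{\lambda} \cup (2k) = 2(\widetilde{\lambda}\cup(k)) = 2\lambda$, so $P$ is an admissible shape for the $\lambda$-factorisation condition. Because $D$ is a $\lambda$-factorisation of index $c$, exactly $c$ matchings $m \in D$ refine $P$.

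The next step is to identify this count with the number of matchings in $D_S$ refining $\widetilde{P}$. If $m \in D$ refines $P$, then in particular $m$ restricts to perfect matchings on each part of $P$, so $m$ refines the coarser partition $\{S,\{1,\ldots,2n\}\setminus S\}$; hence $m \in D_S$, and $m$ also refines $\widetilde{P}$. Conversely, if $m \in D_S$ refines $\widetilde{P}$, then by definition of $D_S$ the matching $m$ refines $\{S,\{1,\ldots,2n\}\setminus S\}$, and combined with refining $\widetilde{P}$ this implies $m$ refines $P$. Thus the two sets of matchings coincide and both have cardinality $c$.

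Since $\widetilde{P}$ was an arbitrary set partition of $\{1,\ldots,2n\}\setminus S$ of shape $2\widetilde{\lambda}$, the number of $m \in D_S$ refining $\widetilde{P}$ is the constant $c$, which is precisely the definition of $D_S$ being a $\widetilde{\lambda}$-factorisation of index $c$. There is no genuine obstacle here: the only thing to verify carefully is that $P$ really has shape $2\lambda$ (which uses $\lambda = \widetilde{\lambda}\cup (k)$ and that $|S|=2k$) and that the two refinement conditions in $D$ and $D_S$ match up as described.
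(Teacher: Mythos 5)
Your proof is correct and follows essentially the same route as the paper: fix an arbitrary shape-$2\widetilde{\lambda}$ partition $\widetilde{P}$ of the complement of $S$, adjoin $S$ to obtain a shape-$2\lambda$ partition, apply the $\lambda$-factorisation property of $D$, and identify the matchings refining the augmented partition with the matchings of $D_S$ refining $\widetilde{P}$. Your write-up even makes both inclusions explicit, which the paper leaves partly implicit, but the argument is the same.
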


\begin{proof}
    Without loss of generality, we can assume that $S = \{2(n-k)+1,\ldots,2n\}$ (otherwise, permute the elements of $\{1,\ldots,2n\}$ suitably). Consider the set $P_S$ of all set partitions of $\{1,\ldots,2n\}$ of shape $2\lambda$ that contain $S$. We find
    \[
        P_S = \{ S \cup \widetilde{P} \mid \widetilde{P} \text{ is a set partition of } \{1,\ldots,2(n-k)\} \text{ of shape } 2\widetilde{\lambda} \}.
    \]
    Now let $\widetilde{P}$ be any set partition of $\{1,\ldots,2(n-k)\}$ of shape $2\widetilde{\lambda}$. Then $S\cup \widetilde{P} \in P_S$ and $S\cup \widetilde{P}$ has shape $2\lambda$. Since $D$ is a $\lambda$-factorisation, we have that there are exactly $c$ perfect matchings $m \in D$ that refine $S \cup \widetilde{P}$. All matchings $m \in D$ that refine a set partition of $P_S$ also refine the set partition $\{S,[2n]\setminus S\}$ and thus give rise to elements $m \setminus S$ of $D_S$. Hence, there are exactly $c$ perfect matchings $\widetilde{m} \in D_S$ that refine $\widetilde{P}$ and $D_S$ is a $\widetilde{\lambda}$-factorisation of index $c$.
\end{proof}

For two partitions $\lambda,\mu$, we write $\mu \preccurlyeq \lambda$ if every part of $\mu$ is a part of $\lambda$. Then Theorem \ref{theorem:derivation} implies the following.

\begin{Corollary}\label{cor:derivation}
    Let $\lambda \vdash n$ and $D$ be a $\lambda$-factorisation of index $c$. Then there exist $\mu$-factorisations of index $c$ for every partition $\mu \preccurlyeq \lambda$.
\end{Corollary}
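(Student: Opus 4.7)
The plan is to proceed by induction on $|\lambda|-|\mu|$, peeling off one part of $\lambda$ at a time via Theorem \ref{theorem:derivation}. First I want to fix the reading of the relation $\mu\preccurlyeq\lambda$: it must be interpreted with multiplicity, i.e., the multiset of parts of $\mu$ is a sub-multiset of the multiset of parts of $\lambda$. (Under the looser reading, which only compares the sets of distinct part-sizes, the claim would be false, since a single derivation can only remove a part that actually occurs in $\lambda$.) In particular $|\mu|\le|\lambda|=n$, and $\mu$ arises from $\lambda$ by deleting a collection of its parts.

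For the base case $|\lambda|-|\mu|=0$ we have $\lambda=\mu$, and $D$ itself is the desired $\mu$-factorisation of index $c$. For the inductive step, assume $|\lambda|>|\mu|$. The multiset difference $\lambda\setminus\mu$ is then non-empty, so I can pick a part $k$ of $\lambda$ lying in $\lambda\setminus\mu$, and let $\widetilde{\lambda}$ denote $\lambda$ with one copy of $k$ deleted; note $\widetilde{\lambda}\vdash n-k$ and $\mu\preccurlyeq\widetilde{\lambda}$. Choose any $S\subseteq\{1,\ldots,2n\}$ of size $2k$ and apply Theorem \ref{theorem:derivation} to produce the derivation $D_S$, which is a $\widetilde{\lambda}$-factorisation of index $c$ on the vertex set $\{1,\ldots,2n\}\setminus S$ of size $2(n-k)$. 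Since $c\ge 1$, every set partition of shape $2\widetilde{\lambda}$ on that vertex set is refined by exactly $c$ matchings in $D_S$, so $D_S$ is non-empty and the hypotheses of the statement are satisfied for $(D_S,\widetilde{\lambda},\mu)$. Because $|\widetilde{\lambda}|-|\mu|=|\lambda|-|\mu|-k<|\lambda|-|\mu|$, the inductive hypothesis applied to $D_S$ and $\mu\preccurlyeq\widetilde{\lambda}$ delivers a $\mu$-factorisation of index $c$ (on $K_{2|\mu|}$, after an obvious relabelling of the vertex set), completing the induction.

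There is really no hard step: once Theorem \ref{theorem:derivation} is in hand, each derivation strictly decreases $|\lambda|-|\mu|$ while preserving both the index $c$ and the relation $\mu\preccurlyeq(\text{current partition})$, and the induction terminates in at most $l(\lambda)-l(\mu)$ steps. The only point genuinely worth flagging is the sub-multiset reading of $\preccurlyeq$, which is what makes ``pick $k\in\lambda\setminus\mu$'' well-defined and ensures $\mu\preccurlyeq\widetilde{\lambda}$ persists after each deletion.
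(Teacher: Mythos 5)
Your proof is correct and is essentially the paper's own argument: the paper's proof is simply ``use derivation multiple times,'' and you have spelled this out as an induction on $|\lambda|-|\mu|$ via repeated applications of Theorem \ref{theorem:derivation}. Your remark that $\mu\preccurlyeq\lambda$ must be read as a sub-multiset condition is a sensible clarification of the intended meaning, not a deviation from the paper's route.
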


\begin{proof}
    Use derivation multiple times.
\end{proof}

Corollary \ref{cor:derivation} implies that if a $\lambda$-factorisation of index $c$ does not exist, then there is no $\mu$-factorisation of index $c$ where all parts of $\lambda$ turn up as parts of $\mu$. Thus, knowledge of the existence or non-existence of $\lambda$-factorisations where $\lambda \vdash n$ with small $n$ gives insight into the existence for bigger values of $n$. For example, we can generalise a result of Cameron \cite[Theorem 7.6]{Cam1976}
and Mathon \cite{Mathon} that there is no $2$-$(2,8)$ partition system.

\begin{Corollary}\label{corollary:2111}
Let $n\ge 4$. There is no $(2,1,\ldots,1)$-factorisation of index 1.
In particular, there is no $(n-2)$-$(2,2n)$ partition system.
\end{Corollary}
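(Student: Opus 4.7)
The plan is to reduce the case of general $n\ge 5$ to the known base case $n=4$ using the derivation apparatus established in Corollary \ref{cor:derivation}. The key observation is that the partition $\mu=(2,1,1)\vdash 4$ satisfies $\mu \preccurlyeq \lambda$, where $\lambda=(2,1,\ldots,1)\vdash n$: the distinct parts of $\mu$ are only $2$ and $1$, both of which occur as parts of $\lambda$. The hypothesis $n\ge 5$ is exactly what guarantees that $\lambda$ has at least two $1$-parts (in fact $n-2\ge 3$ of them), so $\mu$ is honestly a ``sub-partition'' of $\lambda$ and derivation has room to operate.

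From here, I would argue by contradiction. Suppose $D$ is a $(2,1,\ldots,1)$-factorisation of $K_{2n}$ of index $1$ for some $n\ge 5$. Applying Corollary \ref{cor:derivation} with $\mu=(2,1,1)$ produces a $(2,1,1)$-factorisation of index $1$ on $K_{8}$. By the discussion in Example \ref{example:abstracthyperovals}, such an object is the same thing as a $2$-$(2,8)$ partition system, equivalently an abstract hyperoval of order $6$. But Cameron \cite{Cam1976} and Mathon \cite{Mathon} proved that no such object exists, which is the desired contradiction. The final sentence of the corollary (``In particular, there is no $(n-2)\text{-}(2,2n)$ partition system'') is just the translation of the $\lambda$-factorisation statement back into Cameron's language via Example \ref{example:lambda-factorisations}.

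There is essentially no technical obstacle here; the entire argument is a one-step reduction once the derivation machinery is in place. The only thing worth double-checking is the ``bookkeeping'' on derivation: each application of Theorem \ref{theorem:derivation} to a $1$-part reduces $n$ by $1$ and removes exactly one $1$-part from $\lambda$, so iterating $n-4$ times transforms $(2,1,\ldots,1)\vdash n$ into $(2,1,1)\vdash 4$ and moves from $K_{2n}$ to $K_{8}$, all while preserving the index. The constraint $n\ge 5$ ensures that $n-4\ge 1$, so at least one derivation step is performed and the hypothesis is genuinely used; for $n=4$ the statement reduces directly to the Cameron/Mathon non-existence, which is quoted rather than re-proved.
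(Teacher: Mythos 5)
Your argument is correct and is essentially identical to the paper's own proof: the paper likewise quotes the Cameron/Mathon non-existence of a $(2,1,1)$-factorisation of index 1 (equivalently, an abstract hyperoval of order 6) and then invokes Corollary \ref{cor:derivation} to rule out $(2,1,\ldots,1)$-factorisations for all $n\ge 5$, exactly your reduction. Your extra bookkeeping on the iterated derivation steps is a fine elaboration but adds nothing beyond the paper's two-line proof (the paper also records an alternative, longer argument via a vanishing Krein parameter in Example \ref{example:2111}, which you did not need).
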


\begin{proof}
    By Cameron \cite[Theorem 7.6]{Cam1976} and Mathon \cite{Mathon}, there is no $(2,1,1)$-factorisation of index 1. Now the result follows from Corollary \ref{cor:derivation}.
\end{proof}

Alternatively, the theory of symmetric functions can be used for a longer proof of Corollary \ref{corollary:2111}. We present it here, because it also shows that there is a Krein parameter of the association scheme on perfect matchings that is guaranteed to be 0, and this is of independent interest.

\begin{example}\label{example:2111}
Let $S$ be a putative $(2,1,\ldots,1)$-factorisation of index 1.
By Theorem \ref{theorem:main1}, the dual distribution
of $S$ is of the form $(1,x,0,\ldots,0)$ where the positive
rational number $x$ appears in the coordinate indexed by 
the partition $\mu:=(1,1,\ldots, 1)$. 
We now calculate the Krein parameter $q_{\mu\mu}^\mu$, via
the formula given in \cite[Theorem 3.6 and 3.5(i)]{BanIto1984} that uses the eigenvalues and the valencies of the association scheme.

The valency $k_\rho$ of the perfect matching scheme is the size of the $\rho$-sphere $\Omega_\rho$. This is the size of the double coset $B_n x_\rho B_n$ of type $\rho$. By \cite[Ch. VII (2.3)]{Mac1995}, it is given by the formula $|B_n|^2z_{2\rho}^{-1}$ where $z_{2\rho}$ is a non-zero number whose exact value need not concern us. The eigenvalue $P_\mu(\rho)$ of the perfect matching scheme is given by the formula $P_\mu(\rho) = k_\rho \omega^\mu_\rho$ (cf. \cite[Lemma 13.8.3]{GodMea2016}).

We will use the relation $\equiv$ to denote `up to a nonzero scalar'. Using the formula for the value of the zonal spherical function $\omega^{(1,\ldots,1)}_\rho$ given in \cite[Ch. VII Ex. 2(b)]{Mac1995}, we find
\begin{align*}
    q_{\mu\mu}^\mu &\equiv \sum_{\rho \vdash n} \frac{1}{k_\rho^2}\left(P_\mu(\rho)\right)^3
    =  \sum_{\rho \vdash n} k_\rho\left(\omega^\mu_\rho\right)^3
    \equiv  \sum_{\rho \vdash n} z_{2\rho}^{-1} \omega^{(1,\ldots,1)}_\rho \cdot \left(\omega^{(1,\ldots,1)}_\rho\right)^2\\
    &= \sum_{\rho \vdash n} z_{2\rho}^{-1} \omega^{(1,\ldots,1)}_\rho \cdot \left(\frac{(-1)^{n-l(\rho)}}{2^{n-l(\rho)}}\right)^2
    = \sum_{\rho \vdash n} z_{2\rho}^{-1} \omega^{(1,\ldots,1)}_\rho \cdot \frac{1}{2^{2n-2l(\rho)}}\\
    &= 4^{-n} \sum_{\rho \vdash n} z_{2\rho}^{-1} \omega^{(1,\ldots,1)}_\rho \cdot 4^{l(\rho)}\\
    &\equiv c_{(1,\ldots,1)}(4).
\end{align*}
The last step follows from \cite[Ch. VII Ex. 2(c)]{Mac1995}. Here,
\[
    c_\lambda (X) = \prod_{(i,j) \in \lambda} (X-i +2j -1)
\]
is the \emph{content polynomial} (see \cite[Ch. VII (2.24)]{Mac1995}). For the partition $\mu = (1,\ldots,1)$, we find
\[
    c_{(1,\ldots,1)} (X) = \prod_{i=1}^n (X-i+1).
\]
Clearly, we have $c_{(1,\ldots,1)}(4) = 0$ for all $n \ge 5$.
Therefore, by \cite[Corollary 1.5]{BambergLansdown}, $S$ consists
of half of the perfect matchings. 
This is a contradiction, because there is an odd number of perfect matchings of $K_{2n}$.
\end{example}

Table 1 gives explicit constrains that follow from Theorem \ref{theorem:twoparts} and Corollary  \ref{cor:derivation} for $\lambda$-factorisations of index 1 where $\lambda$ has few and small parts.

\begin{center}
\begin{table}[!ht]\label{table:small constraints}
\begin{tabular}{b{9cm}m{4cm}}
\begin{tabular}{ll}
\toprule 
 $\lambda$  & Constraints\\
\midrule
($n-t$,1,$\ldots$,1) &   $t \neq n-2$  \\
($n-1$,1)  & none, always exists  \\
($n-2$,2)  & $n \equiv 0 \pmod 3$ \\
($n-3$,3)  & $n \equiv 0,10 \pmod{15}$\\
($n-3$,2,1)  & $n \equiv 1 \pmod 3$\\
($n-4$,4)  &  $n \equiv 0,21 \pmod{35}$\\
($n-4$,3,1)  &  $n \equiv 1,11 \pmod{15}$\\
($n-4$,2,2)  &  does not exist \\
($n-4$,2,1,1)  &  does not exist  \\
($n-5$,5)  &  $n \equiv 0,36,126,162,225,252 \pmod{315}$\\
\bottomrule
\end{tabular}
\end{tabular}
\vspace{1ex}
\caption{Criteria for the existence of $\lambda$-factorisations of index 1 for some $\lambda$.}
\end{table}
\end{center}

\begin{Remark}
A near-perfect matching is a matching in a graph where all but one vertex are matched, and the graph must have an odd number of vertices. We can define $\lambda$-factorisations for a complete graph on an odd number of vertices, say $2n-1$, by considering only the partitions $\lambda \vdash 2n-1$ that have exactly one odd part. Every such partition extends uniquely to a partition $\widehat{\lambda} \vdash 2n$ that only has even parts. Likewise, every near-perfect matching of $K_{2n-1}$ extends uniquely to a perfect matching of $K_{2n}$. Conversely, deleting the element $2n$ of a set partition of $\{1,\ldots,2n\}$ with only even parts (resp. a perfect matching) leaves a set partition with exactly one odd part (resp. a near-perfect matching). Thus, there is a bijection between near-perfect matchings of $K_{2n-1}$ and perfect matchings of $K_{2n}$ and the corresponding $\lambda$-factorisations are essentially the same structures.\par
We could use similar techniques from the theory of association
schemes to study near-perfect matchings of $K_{2n-1}$. Indeed,
we obtain a natural association scheme from the Gelfand pair $(S_{2n-1},B_{n-1})$ where
$B_{n-1}$ is the centraliser of $(1\,2)(3\,4)\cdots (2n-3\,2n-2)$.
See \cite[\S11]{Lindzey} for more.
\end{Remark}

\section{Examples for small $n$}\label{section:smalln}

We give an overview of the existence of $\lambda$-factorisations of index 1 for small $n$.
The first case is $n=3$, with the only nontrivial partition being $\lambda=(2,1)$, and
a $(n-1,1)$-factorisation of index 1 is a 1-factorisation (see Example \ref{example:lambda-factorisations}), 
and so exists. For $n=4$, we have $\lambda\in \{(2,1,1),(2,2),(3,1)\}$, where again,
the $\lambda=(3,1)$ case is not interesting. 
Corollary \ref{cor:k_and_l} shows that there is no example for $\lambda=(2,2)$. 
It was shown by Cameron \cite[Theorem 7.6]{Cam1976} and Mathon \cite{Mathon} that there is no example for $\lambda=(2,1,1)$. 

For $n=5$, the partitions are totally ordered in the dominance order:
$(1,1,1,1,1) \ltriangle (2,1,1,1) \ltriangle (2,2,1) \ltriangle (3,1,1) \ltriangle (3,2) \ltriangle (4,1)\ltriangle (5)$.
By Corollary \ref{cor:k_and_l}, a $\lambda$-factorisation of index 1 does not
exist for $\lambda\in\{(2,2,1),(3,2)\}$. By Corollary \ref{cor:derivation},
a $(2,1,1,1)$-factorisation of index 1 does not exist.
Again, a $(4,1)$-factorisation of index 1 is a
1-factorisation (and so exists), and a $(3,1,1)$-factorisation is a hyperfactorisation,
which exists in this case (see Example \ref{example:abstracthyperovals}).

Now suppose $n=6$. 
We can immediately rule out $\lambda$-factorisations of index 1 for $\lambda\in\{(2,2,1,1),(2,2,2),(3,2,1),(3,3)\}$ with a simple application of Corollary \ref{cor:k_and_l}. 
Corollary \ref{corollary:2111} rules out $\lambda=(2,1,1,1,1)$. 
So the only nontrivial partitions of 6 that survive 
are $(3,1,1,1)$, $(4,1,1)$, and $(4,2)$. A $(4,1,1)$-factorisation of index 1 is 
equivalent to an abstract hyperoval of order 10.
It was shown in \cite{Lam_et_al} that such an object does not exist,
via an exhaustive computer search. 
There is an example of a $(4,2)$-factorisation of index 1 -- which is explored in Example \ref{example:42}. 
To show that $(3,1,1,1)$-factorisations of index 1 do not exist, we use the fact (Theorem \ref{theorem:derivation}) that the derivation of a putative example would yield a hyperfactorisation of $K_{10}$, and we know there are only two examples (up to isomorphism). So we construct the two examples, extend them to sets of perfect matchings of $K_{12}$ (by adjoining $\{11,12\}$ to each matching), and then we use a constraint satisfaction solver such as \textsf{Gurobi} \cite{gurobi} to show that there is no $(3,1,1,1)$-factorisation whose derivation is the given hyperfactorisation. The GAP code is attached as an appendix, where we make use of the package \textsf{Gurobify} \cite{gurobify}.

\begin{center}
\begin{table}[H]
\begin{tabular}{b{9cm}m{4cm}}
\begin{tabular}{ll}
\toprule 
 $\lambda$  & Exists?\\
\midrule
(2,1,1,1,1) &   No, Corollary \ref{corollary:2111}  \\
(2,2,1,1)  & No, Corollary \ref{cor:k_and_l}  \\
(2,2,2)    & No, Corollary \ref{cor:k_and_l}  \\
(3,1,1,1)  & No (by computer) \\
(3,2,1)    & No, Corollary \ref{cor:k_and_l} \\
(3,3)      & No, Corollary \ref{cor:k_and_l}, Theorem \ref{theorem:twoparts} \\
(4,1,1)    &  No, \cite{Lam_et_al} \\
(4,2)      &  Yes    \\
(5,1)      &  Yes     \\
\bottomrule
\end{tabular}
&
\begin{tikzpicture}[node distance=5mm and 5mm, every node/.style={outer sep=2pt,inner sep=0pt},scale=0.45]

\node (6)              at (0,-1)     {\youngdiagram{6}};
\node (51)             at (0,-2.5) {\youngdiagram{5,1}};
\node (42)             at (0,-4)  {\youngdiagram{4,2}};
\node (411)            at (-3,-5)   {\youngdiagram{4,1,1}};
\node (33)             at (3,-5)    {\youngdiagram{3,3}};
\node (321)            at (0,-6)    {\youngdiagram{3,2,1}};
\node (3111)           at (-3,-7) {\youngdiagram{3,1,1,1}};
\node (222)            at (3,-7)  {\youngdiagram{2,2,2}};
\node (2211)           at (0,-8)  {\youngdiagram{2,2,1,1}};
\node (21111)          at (-1.5,-10)   {\youngdiagram{2,1,1,1,1}};
\node (111111)         at (1.5,-11)  {\youngdiagram{1,1,1,1,1,1}};

\draw (6) -- (51);
\draw (51) -- (42);
\draw (42) -- (33);
\draw (42) -- (411);

\draw (411) -- (321);
\draw (33) -- (321);
\draw (321) -- (3111);

\draw (321) -- (222);

\draw (3111) -- (2211);
\draw (222) -- (2211);
\draw (2211) -- (21111);

\draw (21111) -- (111111);

\end{tikzpicture}
\end{tabular}
\caption{The case $n=6$. On the right is the dominance Hasse diagram of the partitions of 6.
On the left is the existence data for $\lambda$-factorisations of index 1.}
\end{table}
\end{center}

\begin{example}\label{example:42}
Here we give an example of a $(4,2)$-factorisation.
First we let $\mathrm{AGL}(1,11)$ be the stabiliser of $\infty$ in the action
of $\mathrm{PGL}(2,11)$ on the projective line $\PG(1,11)$. 
Let $\F_{11}^{\square}$ be the set of nonzero
squares in the finite field $\F_{11}$ of order 11.
Consider the following two perfect matchings on the 12 points
of the projective line $\PG(1,11)=\F_{11}\cup \{\infty\}$:
\[
        \{\{0,\infty\}\}\cup\{\{x,-x\} : x\in\F_{11}^\square\},
        \quad \{\{0,\infty\}\}\cup\{\{x,7x\} : x\in\F_{11}^\square\}.
\]
    Then the orbits of $\mathrm{AGL}(1,11)$ on these perfect matchings are orbits of size 11 and 22, respectively.
    Their union forms a $(4,2)$-factorisation of index 1. Note that this example is very much related to
    the Steiner system $S(5,6,12)$ -- the \emph{small Witt design}. If we take the subset $\{\infty\}\cup \F_{11}^\square$ of the projective line $\PG(1,11)$ and its orbit under $\mathrm{PSL}(2,11)$, then we have 132 subsets of size 6 such that any 5 points lie in a unique block.

    The authors failed to generalise this example to more than this one example, despite the large amount of symmetry that this example possesses. It would be interesting to 
    find $\lambda$-factorisations admitting $\mathrm{AGL}(1,q)$ (as automorphisms),
    where $2n=q+1$.
\end{example}

For $n=7$, we can again apply Corollary \ref{cor:k_and_l} and Corollary \ref{corollary:2111} to show
that most of the $\lambda$-factorisations (of index 1) do not exist. 
The only nontrivial partitions to consider are $(4,2,1)$, and $(5,1,1)$.
Now a $(5,1,1)$-factorisation of index 1
is a hyperfactorisation of $K_{14}$ of index 1. To the authors' knowledge, it is unknown whether
such a hyperfactorisation exists. If it were to exist, then it would also give rise to a $5$-design
on 14 points with block size $6$ and index $15$ (see \cite[p. 10]{BorJunVan1991}). We emphasise that this $5$-design would have repeated blocks as the complete design on 14 points with block size 6 is a $5$-design of index~9. There is a $5$-design on 14 points with block size 6 and index~3 due to Brouwer~\cite{Brouwer} that does not have repeated blocks. The $5$-design obtained from a hypothetical $(5,1,1)$-factorisation of index 1 could be a 5-fold copy of the design by Brouwer. Finally, the authors could not determine
by computer or otherwise whether a $(4,2,1)$-factorisation of index 1 exists or not.

For $n=8$, after applying Corollary \ref{cor:k_and_l} and Corollary \ref{corollary:2111},
we are left with nontrivial $\lambda$-factorisations (of index 1) with $\lambda\in\{(5,1,1,1),(6,1,1)\}$.
Again, we do not know if these partitions have examples or not. One can see a pattern emerging here
where the possible valid partitions have relatively small length. Finally,
for $n=9$, we have an example of a hyperfactorisation of index 1 of $K_{18}$ from Cameron's construction,
and so the case $\lambda=(7,1,1)$ turns out to have an example. So the remaining partitions where we
do not have a resolution to the existence question are $(5,1,1,1,1), (6,1,1,1), (7,2)$.

\section{Open questions}

We list some open questions for further research that the reader might be interested to work on. We expect that they are roughly ordered increasing in difficulty.
\begin{enumerate}
    \item{
        Can we construct more $(n-2,2)$-factorisations that are not also hyperfactorisations like Example \ref{example:42}? Is there an infinite family?
    }
    \item{
        Can we construct any non-trivial $(n-3,3)$-factorisation? Since this would also yield an $(n-2,2)$-factorisation by Theorem \ref{theorem:comparison of strengths}, one should investigate $(n-2,2)$-factorisations first. 
    }
    \item{
        Can we construct any non-trivial $(n-3,1,1,1)$-factorisation?
    }
    \item{
        Can we investigate similar questions for the coset algebra $S_{kn} / \left(S_k \wr S_n\right)$ of uniform set partitions? Since this is not a Gelfand pair for $k \ge 3$, a different approach is needed.
    }
\end{enumerate}

\section*{Acknowledgements}

The second author was partially funded by the Deutsche Forschungsgemeinschaft (DFG, German Research Foundation) – Project-ID 491392403 – TRR 358. He would also like to thank the DAAD (German Academic Exchange Service) for funding a 2-month research visit to the University of Western Australia during which most of the research on this project took place.

Both authors are extremely grateful to Professor John R. Stembridge for answering a question of ours on zonal spherical functions and pointing us to the relevant information in the literature.
They are also grateful for correspondence they had with Sam Mattheus, Professor Arun Ram and Professor Ole Warnaar.

The authors would like to thank Professor Alfred Wassermann for directing us to the work of Brouwer \cite{Brouwer} referenced in Section~\ref{section:smalln}.

\bibliography{references}
\bibliographystyle{amsplain}

\section*{Appendix: GAP code}

\begin{lstlisting}
# First, we construct the two hyperfactorisations of K_{10}
n := 5;
s2n := SymmetricGroup(2*n);
matching := List([1..n], i -> [2*i-1,2*i]);
matchings := Set(Orbit(s2n, matching, OnSetsSets));;
perm := Action(s2n, matchings, OnSetsSets);;
maxs := MaximalSubgroupClassReps(perm);;
s9 := First(maxs, t -> Size(t)=Factorial(9));
IsSymmetricGroup(s9);
maxs := MaximalSubgroupClassReps(s9);;
m := First(maxs, t -> Size(t)=432);
subgroup216 := MaximalNormalSubgroups(m)[1];
orbits := Filtered(Orbits(subgroup216), t -> Size(t)<=63);;
Sort(orbits, {x,y} -> Size(x) > Size(y));
# Both examples below are equivalent to Mathon's example
o1 := Union(orbits{[1,3]});;
o2 := Union(orbits{[2,3]});;

psl := PSL(2,8);
iso := IsomorphismGroups(SymmetricGroup(9),s9);;
psl_on_matchings := Image(iso, psl);
# This example arises from a hyperoval of PG(2,8) ... 
# Cameron's example
o3 := First(Orbits(psl_on_matchings), t -> Size(t)=63);;

# Now extend each one with [11,12] to get a partial configuration
extensions := List([matchings{o1},matchings{o3}], mm -> 
	List(mm, t -> Concatenation(t,[[11,12]])));;

# now change n and set up the Gurobi program
LoadPackage("gurobify");
n := 6;
s2n := SymmetricGroup(2*n);
matching := List([1..n], i -> [2*i-1, 2*i]);
matchings := Set(Orbit(s2n, matching, OnSetsSets));;
perm := Action(s2n, matchings, OnSetsSets);;
lambda := [3,1,1,1];
# one set partition of shape 2*lambda
p := []; max := 0;
for s in 2*lambda do
	Add(p, max + [1..s]);
	max := Maximum(Union(p));
od;
refines := {x,y} -> ForAll(x, t -> ForAny(y, u -> IsSubset(u,t)));
cp := Filtered(matchings, t -> refines(t,p));;
antidesign1 := SubsetToCharacteristicVector(cp, matchings);;
# These are the antidesigns, as 0,1-vectors.
# We seek a set that intersects each antidesign in precisely
# 1 element.
antidesigns := Orbit(perm, antidesign1, Permuted);;

for i in [1,2] do
	Print("Doing example ", i, "\n");
	ext := extensions[i];
	varnames := List([1..Size(matchings)], i -> Concatenation("x", String(i)));;
	vartypes := ListWithIdenticalEntries( Size(varnames), "Binary" );;
	model := GurobiNewModel(vartypes, varnames);;
	
	# constraint: meets each antidesign in 1 element
	GurobiAddMultipleConstraints(model,antidesigns,
		ListWithIdenticalEntries(Size(antidesigns),"="),
		ListWithIdenticalEntries(Size(antidesigns),1));
	
	# constraint: derives to known hyperfactorisation
	GurobiAddConstraint(model, 
            SubsetToCharacteristicVector(ext,matchings), "=", 63);	
	status := GurobiOptimiseModel(model);
	
	if status = 3 then 
		Print("no example found\n");
	else
		Print("something else has happened\n");
	fi;
od;
\end{lstlisting}

\end{document}